\newtheorem{thm}{Theorem}[section]
\newtheorem{lem}[thm]{Lemma}
\newtheorem{prop}[thm]{Proposition}
\newtheorem{cor}[thm]{Corollary}
\newtheorem*{thm*}{Theorem}
\newcommand{\R}{{\mathbb R}}
\newcommand{\Z}{{\mathbb Z}}
\newcommand{\Q}{{\mathbb Q}}
\newcommand{\C}{{\mathbb C}}
\newcommand{\Gc}{G^\C}
\newcommand{\tr}{\mathsf{tr}}
\newcommand{\Hom}{\mathsf{Hom}}
\newcommand{\Ham}{\mathsf{Ham}}
\newcommand{\Mod}{\mathsf{Mod}(\Sigma)}
\newcommand{\Aut}{\mathsf{Aut}}
\newcommand{\Out}{\mathsf{Out}}
\newcommand{\Inn}{\mathsf{Inn}}
\newcommand{\Ad}{\mathsf{Ad}}
\newcommand{\Id}{\mathbb{I}}
\newcommand{\su}{{\mathfrak{su}(2)}}
\renewcommand{\gg}{\mathfrak{g}}
\newcommand{\SU}{{\mathsf{SU}(2)}}
\newcommand{\GL}{{\mathsf{GL}}}
\newcommand{\slt}{\mathsf{SL}(2,\C)}
\newcommand{\Uo}{\mathsf{U}(1)}
\newcommand{\Mgn}{{\Sigma_{g,n}}}
\newcommand{\MB}{{\Hom_b(\pi,\SU)/\SU}}
\newcommand{\m}{{\mathfrak M}}
\newcommand{\Ff}{{\mathcal F}}
\newcommand{\f}{\mathsf{f}}
\newcommand{\F}{\mathsf{F}}
\newcommand{\Gg}{\mathfrak{G}}
\newcommand{\gG}{\mathfrak{g}}
\newcommand{\sS}{\mathcal{S}}
\newcommand{\X}{\mathfrak{X}}
\newcommand{\Xb}{\X_b}
\newcommand{\XbC}{\X_b^\C}
\newcommand{\catquo}{/{\hskip-3pt}/}
\newcommand{\Pp}{\mathfrak{P}} 
\newcommand{\Xc}{\X^\C}
\newcommand{\In}{\mathfrak{I}_N}
\numberwithin{equation}{section}
\begin{document}

\thanks{
Goldman gratefully acknowledges partial support from
National Science Foundation grant DMS070781 and the 
Oswald Veblen Fund at the Institute for Advanced Study. 
Xia gratefully acknowledges partial support by the National Science Council, 
Taiwan with grants 96-2115-M-006-002 and 97-2115-M-006-001-MY3.}

\title[Mapping class group actions]{
Ergodicity of mapping class group actions on $\SU$-character varieties}
\author[Goldman]{William M. Goldman}
\address{
Department of Mathematics,\\
University of Maryland,\\
College Park, MD 20742 \\
\tt{wmg@math.umd.edu} ({\it Goldman}) }

\author[Xia]{Eugene Z. Xia}
\address{
Division of Mathematics\\
National Center for Theoretical Science (South)\\
Department of Mathematics\\
National Cheng-kung University\\
Tainan 701, Taiwan \\
\tt{ezxia@ncku.edu.tw}
({\it Xia})}

\date{\today}
\dedicatory{To Bob Zimmer, on his sixtieth birthday}

\begin{abstract}
Let $\Sigma$ be a compact orientable surface with genus \(g\) and
\(n\) boundary components $\partial_1,\dots, \,\partial_n$.
Let $b = (b_1, \dots, b_n)\in [-2,2]^n$.
Then the mapping class group
$\Mod$ acts on the relative 
$\SU$-character variety
$
\Xb:=\MB,
$
comprising conjugacy classes of representations
$\rho$ with $\tr(\rho(\partial_i)) = b_i$.
This action preserves a symplectic structure on the open dense smooth submanifold of $\MB$
corresponding to irreducible representations.
This subset has full measure and is connected.
In this note we use the symplectic geometry of this space to give a new proof that this action is ergodic.

\end{abstract}
\maketitle

\section{Introduction}

Let $\Sigma = \Mgn$ be a compact oriented surface of genus $g$
with $n$ boundary components  $\partial_1(\Sigma),...,\partial_n(\Sigma)$.
Choose basepoints $p_0\in\Sigma$ and $p_i\in\partial_i(\Sigma).$
Let $\pi = \pi_1(\Sigma,p_0)$ denote the fundamental group of $\Sigma$.
Choosing arcs from $p_0$ to each $p_i$ identifies each fundamental
group $\pi_1\big(\partial_i(\Sigma),p_i\big)$ with a subgroup
$\pi_1(\partial_i) \hookrightarrow \pi.$
The orientation on $\Sigma$ induces orientations
on each $\partial_i(\Sigma)$.
For each $i$, denote the positively oriented generator
of $\pi_1\big(\partial_i\Sigma\big)$
also by $\partial_i$.

The {\em mapping class group\/} $\Mod$ consists of
isotopy classes of orienta\-tion-preserving homeomorphisms of $\Sigma$
which pointwise fix each $\partial_i$.
The Dehn-Nielsen Theorem (see for example
Farb-Margalit~\cite{FarbMargalit} or Morita~\cite{DehnNielsen}),
identifies $\Mod$ with a subgroup of $\Out(\pi) := \Aut(\pi)/\Inn(\pi)$.
%

Consider a connected compact semisimple Lie group $G$.
Its complexification $\Gc$ is
the group of complex points of a semisimple linear algebraic group
defined over $\R$.
Fix a conjugacy class $B_i\subset G$
for each boundary component $\partial_i$.
Then the {\em relative representation variety\/} is
\begin{equation*}
\Hom_B(\pi, G) := \{\rho \in \Hom(\pi, G) \mid
\rho(\partial_j) \in B_j, \text{~for~}   1 \le j \le n \}.
\end{equation*}

The action of the automorphism group $\Aut(\pi)$ on $\pi$ induces an
action on $\Hom_B(\pi,\Gc)$ by composition.
Furthermore this action
descends to an action of $\Mod\subset\Out(\pi)$ on the categorical quotient or the {\em relative character variety}
\begin{equation*}
\Xc_B(G) := \Hom_B(\pi,\Gc)\catquo\Gc.
\end{equation*}
The moduli space $\Xc_B(G)$ has an invariant
dense open subset  which is a smooth complex submanifold. This subset has
an invariant complex symplectic structure $\omega^\C$,
which is algebraic with respect to the structure of
$\Xc_B(G)$ as an affine algebraic set.
The pullback  $\omega$ of the real part of this complex symplectic
structure under
\begin{equation*}
\X_B(G) := \Hom_B(\pi,G)/G \longrightarrow \Xc_B(G)
\end{equation*}
defines a symplectic structure on 
a dense open subset, 
which 
is a smooth submanifold. The smooth measure
defined by the symplectic structure  is finite
\cite{Nature, Hu1, GHJW} and
$\Mod$-invariant.
The main result of Goldman~\cite{Erg} (when $G$ has $\SU$ and $\Uo$-factors) 
and Pickrell-Xia~\cite{PickrellXia} (when $g > 1$) is:

\begin{thm*} The action of $\Mod$ on each component of $\X_B(G)$
is ergodic with respect to the measure induced by $\omega$.
\end{thm*}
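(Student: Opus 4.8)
The plan is to show that every $\Mod$-invariant function $\phi\in L^2(\X_B(G),\omega)$ is almost everywhere constant; since $\omega$ induces a finite invariant measure, this is equivalent to ergodicity. By the facts recalled above the invariant measure is concentrated on the connected, full-measure, open dense smooth locus $\X_B(G)^{\mathrm{irr}}$ of irreducible classes, where $\omega$ is a genuine symplectic form, so it suffices to prove constancy there.

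The engine of the argument is the interplay between Dehn twists and \emph{twist flows}. For an essential simple closed curve $\gamma\subset\Sigma$ and an invariant function $f$ on $G$ (for $G=\SU$ take $f=\tr$), the function $f_\gamma([\rho]):=f(\rho(\gamma))$ is smooth on $\X_B(G)^{\mathrm{irr}}$, and its Hamiltonian flow $\Xi^\gamma_s$ with respect to $\omega$ is Goldman's twist flow along $\gamma$. Normalizing by the angle $\theta_\gamma$ with $\tr\rho(\gamma)=2\cos\theta_\gamma$, the flow $\Xi^\gamma$ is a circle action on the generic locus whose orbits are the twist circles inside the level sets of $\theta_\gamma$, and the Dehn twist $T_\gamma\in\Mod$ acts on each such circle as rotation by an angle proportional to $\theta_\gamma$. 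First I would record the resulting dichotomy: on $\{\theta_\gamma=c\}$ the cyclic group $\langle T_\gamma\rangle$ is dense in the twist circle whenever $c$ is an irrational multiple of $\pi$, and since $\theta_\gamma$ is a non-constant real-analytic function this holds for almost every $c$. Hence any $T_\gamma$-invariant measurable $\phi$ agrees almost everywhere with a function invariant under the \emph{entire} flow $\Xi^\gamma$.

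Running this for every essential simple closed curve, the $\Mod$-invariant $\phi$ is almost everywhere invariant under all twist flows $\{\Xi^\gamma\}$ simultaneously. The geometric heart is then a \emph{local transitivity} statement: at a generic irreducible class the vectors $\Xi^\gamma$, equivalently the differentials $df_\gamma$ of the trace functions of simple closed curves, span $T_{[\rho]}\X_B(G)^{\mathrm{irr}}$. This rests on two facts — that the trace functions of simple closed curves generate enough of the coordinate ring to separate tangent directions, and that Goldman's Poisson bracket formula gives $\{f_\gamma,f_\delta\}\neq0$ for curves meeting transversally, so the flows do not all commute. Given spanning, the joint twist flow is locally transitive, and a standard Fubini argument shows that an $L^2$ function invariant under a locally transitive family of smooth measure-preserving flows is constant on each orbit neighborhood; connectedness of $\X_B(G)^{\mathrm{irr}}$ upgrades this to global constancy, giving ergodicity.

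I expect the main obstacle to be the spanning/local-transitivity step, since one must control which tangent directions are reached using only \emph{simple} closed curves — the sole curves carrying Dehn twists — rather than arbitrary $\gamma\in\pi$. A secondary technical point is the measure-theoretic passage from $\langle T_\gamma\rangle$-invariance to $\Xi^\gamma$-invariance, which requires the twist flow to be a genuine circle action with non-constant rotation number together with careful treatment of the measure-zero locus (reducibles, coincident angles) where the action-angle picture degenerates.
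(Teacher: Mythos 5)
Your strategy is exactly the paper's: embed each Dehn twist $\tau_\alpha$ as a time-$s$ map of the Hamiltonian (twist) flow of the trace function $\f_\alpha$, use irrational-rotation ergodicity on the twist circles (after discarding the nullset where the rotation number is rational) to upgrade $\langle\tau_\alpha\rangle$-invariance of a measurable function to invariance under the whole flow, and conclude by transitivity of the group generated by these Hamiltonian flows on the connected space $\Xb$. However, there is a genuine gap at precisely the step you flag as ``the main obstacle'': the claim that the differentials $d\f_\gamma$, for $\gamma$ ranging over \emph{simple} closed curves, span the cotangent space. You assert that simple-curve trace functions ``generate enough of the coordinate ring to separate tangent directions,'' but that assertion is the nontrivial content of the paper, not a known fact one may quote. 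The paper proves it (Theorem 2.1) in two steps: (i) by Horowitz--Procesi, the trace functions of the words $A_I = A_{i_1}\cdots A_{i_k}$, with $i_1<\cdots<i_k$ and $k\le 3$, in free generators of $\pi$ generate the full coordinate ring of the $\slt$-character variety; and (ii) an explicit presentation of $\pi_1(\Sigma)$, built from a planar surface with $g$ handles attached and a system of disjoint arcs, realizes \emph{every} such word $A_I$ by a simple closed curve on $\Sigma$. Step (ii) is exactly what lets one pass from ``arbitrary trace functions span'' to ``trace functions of curves carrying Dehn twists span,'' and your proposal contains no substitute for it; without it the argument cannot start.

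A secondary error: your appeal to Goldman's Poisson bracket formula, i.e.\ that $\{f_\gamma,f_\delta\}\neq 0$ for transversally meeting curves ``so the flows do not all commute,'' is a red herring. Non-commutativity of the flows is neither necessary nor sufficient for anything here. What is needed is that generators of the coordinate ring have differentials spanning the cotangent space at \emph{every} point (the paper's Lemma 3.1, via $\m_x/\m_x^2\cong T_x^*$), whence by nondegeneracy of $\omega$ the Hamiltonian vector fields span the tangent space, the orbits of the generated group are open, and openness plus connectedness of $\Xb$ forces transitivity. Note also that spanning only ``at a generic irreducible class,'' as you propose, is not enough for this open-orbit argument as stated, and that connectedness of $\Xb$ is itself a cited input (Newstead, Goldman, or Mehta--Seshadri), not something that comes for free.
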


The goal of this note is to give a short proof in the case that $G=\SU$.

Recently F.\ Palesi~\cite{Palesi} 
proved 
ergodicity of $\Mod$ on
$\X_B(\SU)$ when $\Sigma$ is a compact connected {\em nonorientable\/}
surface with $\chi(\Sigma) \le -2$. When $\Sigma$ is nonorientable,
the character variety fails to possess a symplectic structure (in
fact its dimension may be odd) and it would be interesting to adapt
the proof given here to the nonorientable case.

The proof given here arose from our investigation~\cite{GoldmanXia} of
ergodic properties of subgroups of $\Mod$ on character varieties. The
closed curves on $\Sigma$ play a central role. Namely, every closed
curve defines a conjugacy class of elements in $\pi$, and hence a
regular function
\begin{align*}
\Hom(\pi,\Gc) &\xrightarrow{\f_\alpha} \C \\
\rho & \longmapsto  \tr\big(\rho(\alpha)\big).
\end{align*}
for some representation $\Gc\longrightarrow\GL(N,\C)$.  These trace functions $\f_\alpha$ are $G^\C$-conjugate invariant and
results of Procesi~\cite{Procesi} imply that such functions generate the coordinate
ring $\C[\X_B(\slt)]$ of $\X_B(\slt)$.

{\em Simple\/} closed curves $\alpha$ determine elements of $\Mod$, namely the
{\em Dehn twists\/} $\tau_\alpha$. Let $S$ be a set of simple closed curves
on $\Sigma$.
Our methods apply to the subgroup $\Gamma_S\subset\Mod$
generated by $\tau_\alpha$, where $\alpha\in S$. Our proof may be summarized:
{\em if the trace functions $\f_\alpha$ generate $\C[\X_B(\slt)]$,
then the action of  $\Gamma_S$ on each component of $\X_B(\SU)$ is ergodic.}

The original proof~\cite{Erg} decomposes 
$\Sigma$ 
along a set $\Pp$ of $3g-3 + 2n$ disjoint curves
into 
\begin{equation*}
2g-2 + n = -\chi(\Sigma)  
\end{equation*}
$3$-holed spheres (a {\em pants decomposition\/}.)
The subgroup $\Gamma_\Pp$ of $\Mod$ stabilizing $\Pp$ 
is generated by Dehn twists along curves in $\Pp$.
The corresponding trace functions define a map
\begin{equation*}
\Xb \xrightarrow{\f_\Pp} [-2,2]^{\Pp} 
\end{equation*}
which is an ergodic decomposition for the 
action of $\Gamma_\Pp$. Thus any measurable 
function invariant under $\Gamma_\Pp$
must factor through $\f_\Pp$.
Changing $\Pp$ by elementary moves
on $4$-holed spheres, and a detailed
analysis in the case of $\Sigma_{0,4}$ and $\Sigma_{1,1}$
implies ergodicity under all of $\Mod$.
The present proof uses the commutative algebra
of the character ring (in particular the work
of Horowitz~\cite{Horowitz}, Magnus~\cite{Magnus}
and Procesi~\cite{Procesi}, and the identification
of the twist flows with the Hamiltonians of
trace functions~\cite{InvFuns}).
Although it is not used in \cite{Erg},
the map $\f_\Pp$ is the {\em moment map\/}
for the $\R^\Pp$-action by twist flows,
as well as the ergodic decomposition for $\Gamma_\Pp$.
Finding sets $S$ of simple curves whose trace functions generate
the character ring promises to be useful to prove
ergodicity of the  subgroup of $\Mod$ generated by
Dehn twists along elements of $S$ (Goldman-Xia~\cite{GoldmanXia}.)

In a similar direction, Sean Lawton has pointed out 
that this method of proof,
(combined with Lawton [Lawton1,Lawton2]) 
implies in at least some cases ergodicity
of $\Mod$ on the relative
$\mathsf{SU}(3)$-character varieties 
(except when $\Sigma \approx \Sigma_{0,3}$, where it is not true).

We are grateful to Sean Lawton, David Fisher and the anonymous
referee for helpful suggestions on this manuscript.

With great pleasure we dedicate this paper to Bob Zimmer. Goldman
first presented this result in Zimmer's graduate course at Harvard
University in Fall 1985. Goldman would like to express his warm
gratitude to Zimmer for the friendship, support and mathematical
inspiration he has given over many years.

\section{Simple generators for the character ring}

In the paper we restrict to the case $G = \SU$ and $\Gc = \slt$.
Conjugacy classes in $G = \SU$ are the level sets of
the {\em trace function\/}
$\SU \xrightarrow{\tr} [-2,2]$.
Thus a collection $B = (B_1,\dots,B_n)$ of conjugacy classes
in $\SU$ is precisely given by an $n$-tuple
\begin{equation*}
b = (b_1,\dots,b_n)\in [-2,2]^n.
\end{equation*}
We denote the {\em relative representation variety\/} by:
\begin{equation*}
\Hom_b(\pi,\SU) := \{\rho \in \Hom(\pi,\SU) \mid \tr\big(\rho(\partial_i)\big) = b_i \}
\end{equation*}
and its quotient, the {\em relative character variety\/} by:
\begin{equation*}
\Xb := \Hom_b(\pi,\SU)/\SU.
\end{equation*}

\begin{thm}\label{thm:simple}
There exists a finite subset $\sS\subset\pi$ corresponding
to simple closed curves on $\Sigma$ such that $\{\f_\gamma :
\gamma\in\sS\}$ generates the coordinate ring $\C[\Xb]$.
\end{thm}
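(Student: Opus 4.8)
The plan is to combine the known finite generation of the character ring by trace functions with a geometric–algebraic reduction that replaces the generating words by simple closed curves. First I would invoke the results of Procesi and Horowitz quoted above: for $\pi$ free of rank $N = 2g + n - 1$ (the case $n \ge 1$), fixing a free basis $x_1,\dots,x_N$, the ring $\C[\Xb]$ is generated by the finitely many trace functions $\f_w$ where $w$ runs over the products $x_{i_1}x_{i_2}\cdots x_{i_k}$ with $i_1 < \cdots < i_k$ and $k \le 3$. When $n = 0$ the surface group is a one-relator quotient of the free group $F_{2g}$, so $\X_{b}(\slt)$ is a subvariety of $\X(F_{2g})$ and the same finite set of trace functions generates after restriction. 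Thus it suffices to express each of these finitely many $\f_w$ as a polynomial in $\{\f_\gamma : \gamma \in \sS\}$ for a suitable finite set $\sS$ of simple closed curves.

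Second, I would choose the free basis to be represented by simple closed curves adapted to the topology: loops $x_i$ around the boundary components together with the standard handle generators. With such a choice the single generators $\f_{x_i}$ and the \emph{adjacent} products — a consecutive block $x_i x_{i+1}\cdots x_j$ of boundary loops, and the handle products $a_\ell b_\ell$ — are already trace functions of simple closed curves. The difficulty is concentrated entirely in the remaining Horowitz generators, the non-adjacent double and triple products such as $\f_{x_ix_j}$ and $\f_{x_ix_jx_k}$, whose natural representatives are not simple.

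Third, I would remove these using the fundamental $\slt$ trace identities: $\tr(A)=\tr(A^{-1})$, the Cayley–Hamilton relation $\tr(AB)+\tr(AB^{-1}) = \tr(A)\,\tr(B)$, and its three-variable consequence expressing $\tr(ABC)$ through the traces of $A,B,C,AB,BC,CA$. These let me trade a non-adjacent product for traces of words that, after choosing intersection-minimizing representatives, are again simple closed curves or are of strictly smaller complexity. I would organize this as an induction, the complexity being the number of boundary components enclosed (equivalently the word length, or the geometric self-intersection of the representative), with base cases the pair of pants $\Sigma_{0,3}$, the four-holed sphere $\Sigma_{0,4}$, and the one-holed torus $\Sigma_{1,1}$. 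On each base piece the classical Fricke description shows directly that the character ring is generated by traces of its essential simple closed curves: for $\Sigma_{0,3}$ the three boundary traces, for $\Sigma_{1,1}$ the traces $\tr(A),\tr(B),\tr(AB)$ of the slope curves, and for $\Sigma_{0,4}$ the four boundary traces together with the three classes of essential simple closed curves. The general surface is assembled from such pieces embedded along the curves of a pants decomposition $\Pp$, and each non-adjacent product is pushed into one such embedded four-holed sphere, where its trace is resolved into simple-curve traces by the base analysis and the elementary move exchanging the two separating curves.

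The main obstacle is exactly the termination and closure of this reduction: a single application of the Cayley–Hamilton identity tends to replace a given word by longer conjugated words, so the induction must be arranged so that each step genuinely decreases the chosen complexity and never leaves the $\C$-span of simple-curve traces. I expect to control this by working inside the embedded $\Sigma_{0,4}$ and $\Sigma_{1,1}$ subsurfaces, where the relevant identities close up among finitely many simple closed curves, and by inducting on the number of pieces of $\Pp$; verifying that the finitely many $\sS$-curves so produced indeed suffice, and assembling the local generation statements into generation of the whole ring $\C[\Xb]$, is where the real work lies.
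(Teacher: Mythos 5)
Your first step --- invoking Horowitz--Procesi to reduce to the finitely many traces $\f_I$ of increasing products $A_{i_1}\cdots A_{i_k}$ with $k\le 3$, and handling the closed case by restriction to a subvariety of the free-group character variety --- is exactly how the paper begins. But the proposal then diverges and, as you yourself concede, does not close. The entire content of the theorem sits in your third step, the trace-identity reduction of the ``non-adjacent'' generators, and you explicitly leave its termination and closure unresolved (``verifying that the finitely many $\sS$-curves so produced indeed suffice \dots is where the real work lies''). That is not a technical loose end: a single application of $\tr(AB)+\tr(AB^{-1})=\tr(A)\tr(B)$ can lengthen words, and you exhibit no complexity (word length, enclosed boundary components, self-intersection number) that provably decreases, nor any argument that the words generated along the way remain representable by simple closed curves. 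As written, the proposal is a plan for a proof whose hardest step is missing.

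The idea you are missing is that no algebraic reduction is needed at all: your premise that non-adjacent products such as $A_iA_j$ or $A_iA_jA_k$ lack simple representatives is an artifact of a poor choice of representatives, and the paper's proof consists precisely of a geometric construction making \emph{every} Horowitz generator simple. Realize $\Sigma$ as a planar surface $P\approx\Sigma_{0,g+n}$ with $g$ handles attached; put two basepoints $p_j^{\pm}$ on each attaching or boundary circle; join consecutive stations by disjoint arcs $\beta_j$; and on the $j$-th handle choose disjoint arcs $\gamma_j,\delta_j,\eta_j$ representing $A_{2j-1}$, $A_{2j}$ and $A_{2j-1}A_{2j}$ respectively, with an analogous arc $(\alpha_i^+)^{-1}$ at each boundary station. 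Because every index set $I\in\In$ is \emph{increasing}, the loop $A_I$ can be chosen to visit each station exactly once in cyclic order, selecting at station $j$ the arc dictated by $I$ (the case of two indices on one handle is exactly where $\eta_j$ enters); the resulting concatenation of embedded arcs meets itself only at the points $p_j^-$, each occurring once as an initial and once as a terminal endpoint, so $A_I$ is simple. Taking $\sS=\{A_I : I\in\In\}$, Proposition~\ref{prop:free} gives generation of $\C[\Xc]$, and restriction along $\XbC\hookrightarrow\Xc$ gives generation of $\C[\Xb]$, with the closed case handled as the relative variety of $\Sigma_{g,1}$ with $b_1=2$. This one construction replaces your entire induction over pants decompositions --- which, incidentally, is much closer in spirit to the original ergodicity argument of \cite{Erg} than to the proof of this theorem.
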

\noindent 
We prove this theorem in \S\ref{sec:MHP generators} and \S\ref{sec:simpleloops}.

\subsection{Magnus-Horowitz-Procesi generators}\label{sec:MHP generators}

The following well known proposition is a direct consequence of the
work of Horowitz~\cite{Horowitz} and Procesi~\cite{Procesi}.  Compare
also Magnus~\cite{Magnus}, Newstead~\cite{Newstead} and
Goldman~\cite{TraceCoords}.

\begin{prop}\label{prop:free}
Let $\F_N$ be the free group freely generated by
$A_1,\dots, A_N$,
and let
\begin{equation*}
\X(N) := \Hom(\F_N,\slt)\catquo\slt
\end{equation*}
be its $\slt$-character variety.
Denote by $\In$ the collection of all
\begin{equation*}
I = (i_1,\dots,i_k)\in \Z^k
\end{equation*}
where
\begin{equation*}
1 \le i_1 < \dots < i_k \le N
\end{equation*}
and $k\le 3$. For $I\in\In$, define
\begin{equation*}
A_I := A_{i_1}\dots A_{i_k}
\end{equation*}
and let
\begin{align*}
\X(N) &\xrightarrow{\f_I} \C \\
[\rho] &\longmapsto \tr\big(\rho(A_I)\big)
\end{align*}
the corresponding trace functions.
Then the collection
\begin{equation*}
\{\f_I \mid I\in\In \}
\end{equation*}
generates the coordinate ring $\C[\X(N)]$.
\end{prop}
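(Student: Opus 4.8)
The plan is to start from the first fundamental theorem of invariant theory for $2\times 2$ matrices and then reduce, by repeated use of the $\slt$ trace identities, to the short words indexed by $\In$. First, by Procesi~\cite{Procesi} the coordinate ring $\C[\X(N)]$ is generated by the trace functions $\f_W\colon [\rho]\mapsto \tr(\rho(W))$ as $W$ ranges over \emph{all} elements of $\F_N$. Since every $\rho(A_i)$ lies in $\slt$, the Cayley--Hamilton relation $M^2 = \tr(M)\,M - I$ yields $\rho(A_i)^{-1} = \tr(\rho(A_i))\,I - \rho(A_i)$, so each inverse can be eliminated at the cost of a scalar factor $\f_{(i)}$ and a sign. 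Expanding by linearity of the trace, it therefore suffices to treat \emph{positive} words $W = A_{j_1}\cdots A_{j_\ell}$ in the generators.

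Next I would reduce the length $\ell$ by induction, which is the heart of the argument and goes back to Horowitz~\cite{Horowitz} and Magnus~\cite{Magnus}. The essential identities are cyclic invariance $\tr(XY)=\tr(YX)$, the product identity $\tr(XY) = \tr(X)\tr(Y) - \tr(XY^{-1})$ (again a consequence of Cayley--Hamilton), and the fundamental trace identity
\begin{equation*}
\tr(XYZ) + \tr(XZY) = \tr(X)\tr(YZ) + \tr(Y)\tr(ZX) + \tr(Z)\tr(XY) - \tr(X)\tr(Y)\tr(Z).
\end{equation*}
Collapsing a repeated adjacent letter via $A_i^2 = \tr(A_i)\,A_i - I$ and using cyclic invariance to position letters appropriately, one shows that the trace of any positive word of length $\ell \ge 4$ is a $\Z$-polynomial in traces of strictly shorter words, so the induction terminates at length $\le 3$.

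Finally I would put the surviving short words into the normal form demanded by $\In$, namely distinct and strictly increasing indices. Repeated indices disappear by Cayley--Hamilton as above: for instance $\tr(A_i^2) = \f_{(i)}^2 - 2$. For two distinct letters, cyclic invariance gives $\tr(A_iA_j)=\f_{(\min,\max)}$. For three distinct letters the six orderings fall into two cyclic classes, and the fundamental trace identity relates $\tr(A_iA_jA_k)+\tr(A_iA_kA_j)$ to the length-$\le 2$ terms already handled; hence every length-three monomial equals $\pm\f_{(i',j',k')}$ with $i'<j'<k'$, modulo shorter terms. Combining these steps expresses every Procesi generator $\f_W$ as a polynomial in $\{\f_I : I\in\In\}$, which is the assertion.

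The main obstacle is the length-reduction induction. One must verify that each application of the fundamental trace identity and of Cayley--Hamilton genuinely lowers a suitable complexity measure—word length together with the number of repeated or out-of-order indices—so that the rewriting terminates and really lands in the $\C$-span of the $\f_I$. This bookkeeping, rather than any single identity, is where the real content lies, and it is precisely what the Horowitz--Magnus reduction supplies.
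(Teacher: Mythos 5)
Your proposal is correct and takes essentially the same route as the paper, which in fact offers no proof at all, stating the proposition as ``a direct consequence of the work of Horowitz and Procesi'': your sketch is precisely that reduction --- Procesi's theorem giving trace functions of all words as generators, followed by the Horowitz--Magnus rewriting via Cayley--Hamilton and the trace identities down to strictly increasing words of length at most three. The single step you defer to the literature --- that a word of length at least four in \emph{distinct} letters (where no repeated-letter collapse is available) genuinely reduces, which rests on the observation that an adjacent swap of letters costs a sign modulo traces of shorter words while a cyclic shift is free, so that twice such a trace is a polynomial in shorter traces --- is exactly the content of the cited Horowitz argument, so your treatment is no less complete than the paper's own.
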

We shall refer to the coordinate ring $\C[\X(N)]$
as the {\em character ring.\/} Recall that by
definition it is the 
subring of the ring of regular functions 
\begin{equation*}
\slt^N \longrightarrow \C
\end{equation*}
consisting of $\Inn(\slt)$-invariant functions.

\subsection{Constructing simple loops}\label{sec:simpleloops}

Suppose that $\Sigma$ has genus $g \ge 0$ and $n > 0$ boundary components.
(We postpone the case when $\Sigma$ is closed, that is $n=0$, to the end of this
section.)
We suppose that $\chi(\Sigma) = 2-2g - n < 0$.
Then $\pi_1(\Sigma)$ is free of rank $N = 2g + n -1$.
We describe a presentation of 
$\pi_1(\Sigma)$ 
such that the above elements $A_I$, for $I\in\In$
can be represented by simple closed curves on $\Sigma$.
We also identify $I$ with the {\em subset\/}
\begin{equation*}
\{ i_1, \dots, i_k \} \subset  \{1, \dots, N \}.
\end{equation*}
The fundamental group 
$ 
\pi_1(\Sigma)$ admits a presentation
with generators
\begin{equation*}
A_1, \dots, A_{2g}, A_{2g+1}, \dots, A_{2g+n}
\end{equation*}
subject to the relation
\begin{equation*}
A_1 A_2 A_1^{-1} A_2^{-1} \dots
A_{2g-1} A_{2g} A_{2g-1}^{-1} A_{2g}^{-1} \dots
A_{2g+1} \dots  A_{2g+n}  \;=\; 1.
\end{equation*}
Then 
\begin{equation*}
\pi \;=\; \pi_1(\Sigma) \;\cong\; \F_{2g+n-1},  
\end{equation*}
freely generated by the set $\{A_1,\dots, A_{2g+n-1}\}$.

To represent the elements $A_I\in
\pi_1(\Sigma)$ 
explicitly as {\em simple
loops,\/} we realize $\Sigma$ as the union of a planar surface $P$ and
$g$ {\em handles\/} $H_1,\dots, H_g$. In the notation of
\cite{TraceCoords}, $P\approx \Sigma_{0,g+n}$ has $g+n$ boundary
components 
\begin{equation*}
\alpha_1,\dots,\alpha_g,\alpha_{g+1},\dots, \alpha_{g+n} 
\end{equation*}
and each handle $H_j\approx\Sigma_{1,1}$ is a one-holed torus. 
The original surface $\Sigma$ is obtained by attaching $H_j$ to $P$ along
$\alpha_j$ for $j=1,\dots,g$.

We construct the curves $A_i$, for $i=1,\dots,2g+n$ as follows.
Choose a pair of basepoints $p_j^+, p_j^-$ on each $\alpha_j$
for $j=1,\dots,g+n$. Let $\alpha_j^-$ be the oriented subarc of $\alpha_j$ from
$p_j^-$ to $p_j^+$, and $\alpha_j^+$  the corresponding subarc from $p_j^+$ to $p_j^-$.
Thus $\alpha_j \simeq \alpha_j^-\ast\alpha_j^+$ is a boundary component of $P$.

Choose a system of disjoint arcs $\beta_j$ from $p_j^+$ to $p_{j+1}^-$,
where $\beta_{g+n}$ runs from $p_{g+n}^+$ to $p_{1}^-$ in the {\em cyclic indexing}
of $\{1,2,\dots,g+n\}$. Compare Figure~\ref{fig:planar3}.

For $I\in\In$, the curve $A_I$ will be the concatenation 
$E_1^I \ast \dots E_{g+n}^I$ of simple arcs $E_j^I$ 
running from $p_j^-$ to $p_{j+1}^-$.
Define
\begin{equation*}
E_j^\emptyset \;:=\; \alpha_j^-\ast \beta_j,
\end{equation*}
so that 
\begin{equation*}
A^{\emptyset}  \;:=\; E_1^\emptyset\ast \dots\ast E_N^\emptyset 
\end{equation*}
is a contractible loop.

Suppose first that $i> 2g$. 
Then the curve $A_i$ will be freely homotopic to the oriented loop
$\alpha_i^{-1}$, corresponding to a component of $\partial\Sigma$. 
The arc
\begin{equation*}
E_i^+ \;:=\; (\alpha_i^+)^{-1}\ast \beta_i
\end{equation*}
goes from $p_i^-$ to $p_{i+1}^-$ (cyclically).
Then $A_i$ corresponds to the arc
\begin{align*}
A_i  \; := \;  & E_1^\emptyset \ast \dots \ast E_{2g}^\emptyset \\
 & \ast E_{2g+1}^\emptyset \ast \dots \ast E_i^+ \ast \dots E_{2g+n-1}^\emptyset 
\end{align*}

%
%

For $i\le 2g$, the curves $A_i$ will lie on the handles $H_j$.
The curves $A_{2j-1}$ and $A_{2j}$ define a basis for the relative homology 
of $H_j$ and 
the relative homology class of the curve 
\begin{equation*}
A_{2j-1,2j} := A_{2j-1}A_{2j} 
\end{equation*}
is their sum. 
Compare Figures~\ref{fig:handle2},\ref{fig:handle3}.

As above we define three simple arcs $\gamma_j,\delta_j,\eta_j$ running from
$p_j^-$ to $p_j^+$ to build these three curves respectively.

The boundary $\partial H_j$ identifies with $\alpha_j$ for $j=1,\dots, g$.
The two points on $\partial H_j$ which identify to
\begin{equation*}
p_j^{\pm}\in\alpha_j\subset\partial P
\end{equation*}
divide $\partial H_j$ into two arcs.
Without danger of confusion, denote these arcs by $\alpha_j^{\pm}$ as well.
On the handle $H_j$, choose disjoint simple arcs $\gamma_j$, $\delta_j$ and $\eta_j$ running from
$p_i^+$ to $p_i^-$ such that the
\begin{equation*}
H_j \setminus (\gamma_j\cup \delta_j)
\end{equation*}
is an hexagon.
Two of its edges correspond to the arcs $\alpha_j^\pm$. Its other four edges are
the two pairs obtained by splitting $\gamma_j$ and $\delta_j$.
(Compare Figure~\ref{fig:handle2}.)
Let $\eta_j$ to be a simple arc homotopic to $\gamma_j\ast (\alpha_j^+)^{-1}\ast \delta_j$, 
where $\ast$ denotes concatenation.
For each $j\le g$, the arcs
\begin{align*}
E_j^\gamma &\;=\; \gamma_j \ast \beta_j \\
E_j^\delta &\;=\; \delta_j \ast \beta_j \\
E_j^\eta &\;=\; \eta_j \ast \beta_j 
\end{align*}
run from $p_j^-$ to $p_j^+$ and define:
\begin{align*}
A_{2j-1} &\;=\; E_1^\emptyset \ast \dots \ast E_j^\gamma \ast \dots E_g^\emptyset \ast \dots \ast E_{g+n}^\emptyset \\
A_{2j} &\;=\; E_1^\emptyset \ast \dots \ast E_j^\delta \ast \dots E_g^\emptyset \ast \dots \ast E_{g+n}^\emptyset  \\
A_{2j-1,2j} &\;=\; E_1^\emptyset \ast \dots \ast E_j^\eta \ast \dots E_g^\emptyset \ast \dots \ast E_{g+n}^\emptyset.
\end{align*}
In general, suppose that $I \in \In$.
Define
\begin{equation*}
A_I := E_1^I \ast \dots \ast E_{g+n} 
\end{equation*}
where
\begin{equation*}
E_j^I = \begin{cases}
E_j^\emptyset & \text{ if } j\notin I \\
E_j^+ & \text{ if } j\in I 
\end{cases}\end{equation*}
if $j>g$ and
\begin{equation*}
E_j^I = \begin{cases}
E_j^\emptyset & \text{ if } 2j-1, 2j\notin I \\
E_j^\gamma & \text{ if } 2j-1\in I,\, 2j\notin I \\
E_j^\delta & \text{ if } 2j-1\notin I,\, 2j\in I \\
E_j^\eta & \text{ if } 2j-1,2j \in I.
\end{cases}\end{equation*}
if $j\le g$.

Now each $A_I$ is {\em simple:\/} 
Each of the oriented arcs $\alpha_i^\pm$, $\beta_i, \gamma_i,\delta_i,\eta_i$ are embedded
and intersect only along $p_i^{\pm}$. In particular each of the above oriented arcs 
begins at some $p_i^\pm$ and ends at some $p_i^\mp$.
Thus each 
\begin{equation*}
E_j^\emptyset,  E_j^+, 
E_j^\gamma, E_j^\delta, E_j^\eta
\end{equation*}
is a simple arc running from $p_j^-$ to $p_{j+1}^-$, cyclically.
The loop $A_I$ concatenates these arcs, which only intersect
along the $p_i^-$. Each of these endpoints occurs exactly twice, once as the initial
endpoint and once as the terminal endpoint.
Therefore the loop $A_I$ is simple.

This collection $A_I$, for $I\in\In$, 
of simple curves determines a collection of regular functions
$\f_I$ on $\Xc$ which generate the character ring. 
Since the inclusion 
\begin{equation*}
\XbC \hookrightarrow \Xc
\end{equation*}
is a morphism of algebraic sets, the restrictions
of $\f_I$ to $\XbC$ generate the coordinate ring of $\XbC$.

The case $n=0$ remains. To this end, the character variety of $\Sigma_{g,0}$ appears as the
relative character variety of $\Sigma_{g,1}$ with boundary condition $b_1=2$.
As above, the restrictions of the $\f_I$ from $\Sigma_{g,1}$ to the character variety
of $\Sigma_{g,0}$ generate its coordinate ring. The proof
of Theorem~\ref{thm:simple} is complete.


 \clearpage
\begin{figure}[!h]
\centering
\psfrag{a1}{$A_1$}
\psfrag{a2}{$A_2$}
\psfrag{a3}{$A_3$}
\psfrag{a4}{$A_4$}
\psfrag{a1a2}{$A_1A_2$}
\psfrag{a1a3}{$A_1A_3$}
\psfrag{a1a2a3}{$A_1A_2A_3$}
\psfrag{a2a3}{$A_2A_3$}
\includegraphics[scale=.5]
{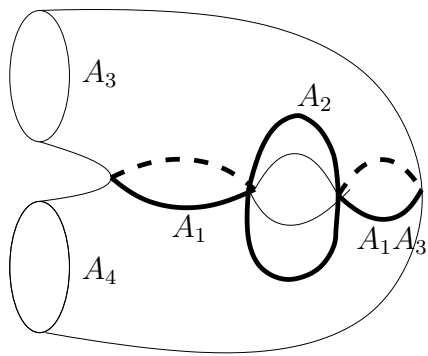}
\caption{Simple loops on $\Sigma_{1,2}$
corresponding to  words $A_1,A_2,A_3,A_1A_3$ 
and $A_4^{-1} = A_1A_2A_1^{-1}A_2^{-1}A_3$
in free generators $\{A_1,A_2,A_3\}$.}
\label{fig:twoholedtorus1}
\end{figure}

\begin{figure}[h]
\centering
\psfrag{a1}{$A_1$}
\psfrag{a2}{$A_2$}
\psfrag{a3}{$A_3$}
\psfrag{a4}{$A_4$}
\psfrag{a1a2}{$A_1A_2$}
\psfrag{a1a3}{$A_1A_3$}
\psfrag{a1a2a3}{$A_1A_2A_3$}
\psfrag{a2a3}{$A_2A_3$}
\includegraphics[scale=.25]{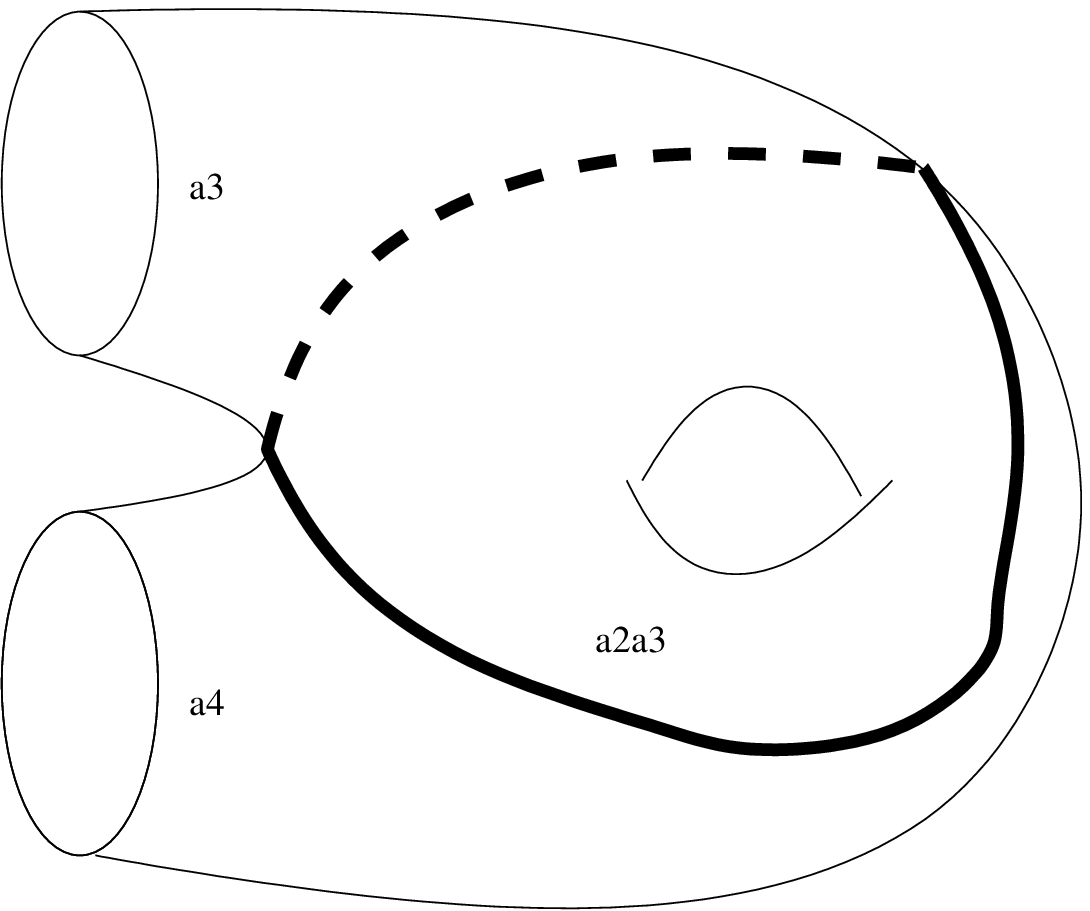}
\caption{Simple loop 
corresponding to $A_2A_3$.}
\label{fig:twoholedtorus2}
\end{figure}

\begin{figure}[h]
\centering
\psfrag{a1}{$A_1$}
\psfrag{a2}{$A_2$}
\psfrag{a3}{$A_3$}
\psfrag{a4}{$A_4$}
\psfrag{a1a2}{$A_1A_2$}
\psfrag{a1a3}{$A_1A_3$}
\psfrag{a1a2a3}{$A_1A_2A_3$}
\psfrag{a2a3}{$A_2A_3$}
\includegraphics[scale=.25]{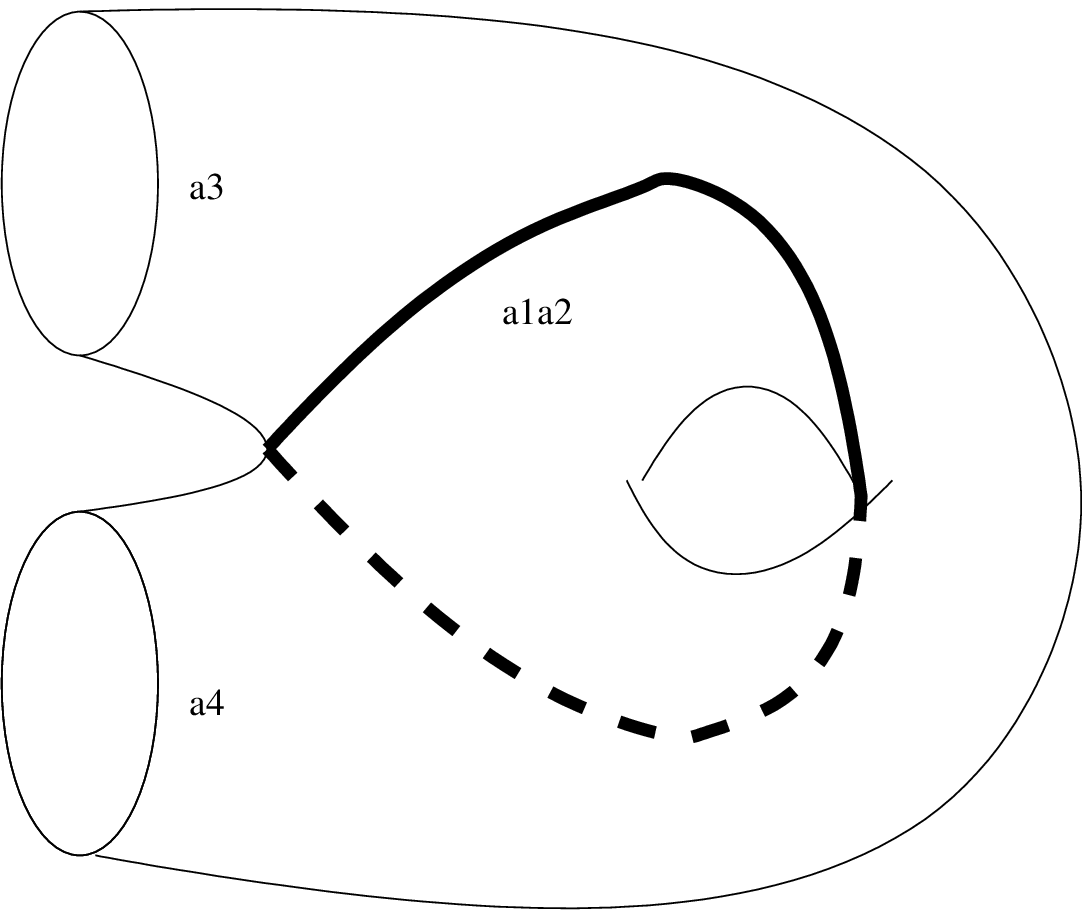}
\caption{Simple loop 
corresponding to  $A_1A_2$.}
\label{fig:twoholedtorus3}
\end{figure}

\begin{figure}[h]
\centering
\psfrag{a1}{$A_1$}
\psfrag{a2}{$A_2$}
\psfrag{a3}{$A_3$}
\psfrag{a4}{$A_4$}
\psfrag{a1a2}{$A_1A_2$}
\psfrag{a1a3}{$A_1A_3$}
\psfrag{a1a2a3}{$A_1A_2A_3$}
\psfrag{a2a3}{$A_2A_3$}
\includegraphics[scale=.25]{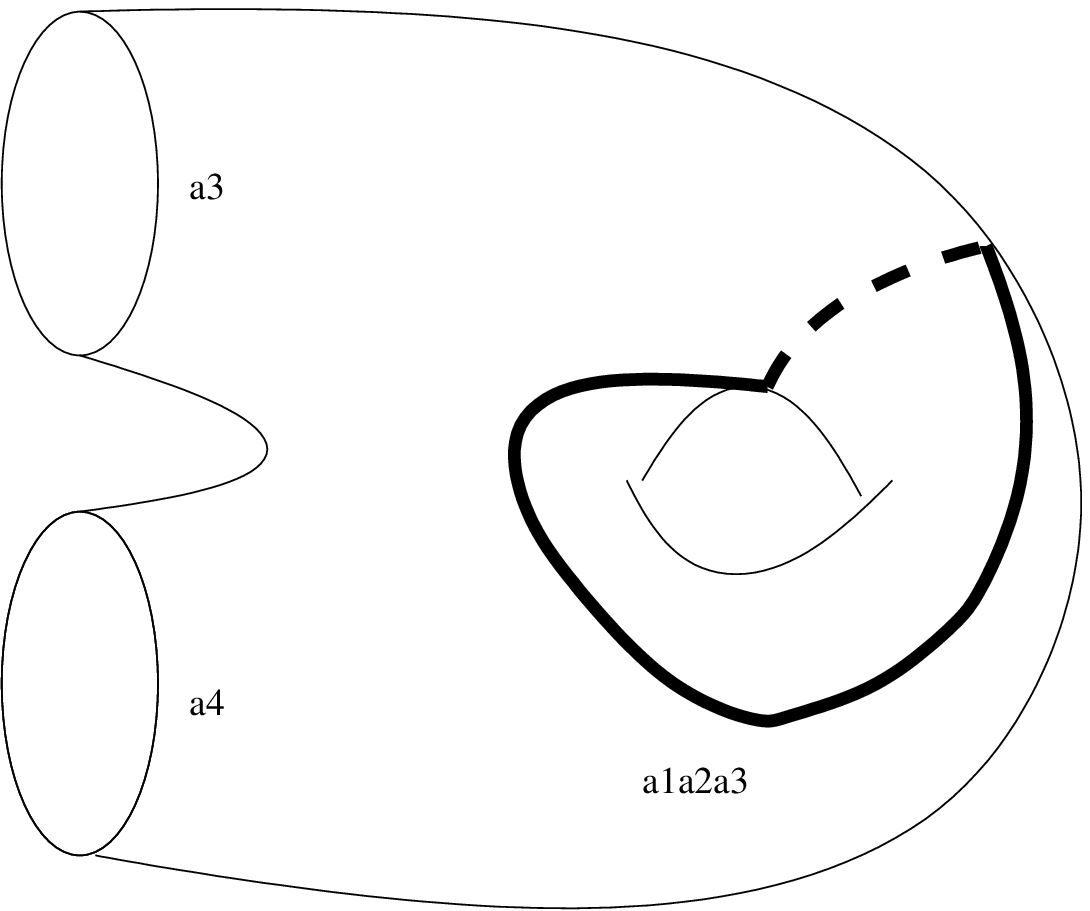}
\caption{Simple loop corresponding to  $A_1A_2A_3$.}
\label{fig:twoholedtorus4}
\end{figure}
\clearpage

\begin{figure}[!hbp]
\centering
\psfrag{p1+}{$p_1^-$} 
\psfrag{p1-}{$p_1^+$}
\psfrag{p2-}{$p_2^+$}
\psfrag{p2+}{$p_2^-$}
\psfrag{p3-}{$p_3^+$}
\psfrag{p3+}{$p_3^-$}
\psfrag{p4-}{$p_4^-$} 
\psfrag{p4+}{$p_4^+$} 

\psfrag{d1}{$\alpha_1$}
\psfrag{d2}{$\alpha_2$}
\psfrag{d3}{$\alpha_3$}
\psfrag{d4}{$\alpha_4$}
\psfrag{a1+}{$\alpha_1^+$}
\psfrag{a2+}{$\alpha_2^+$}
\psfrag{a3+}{$\alpha_3^+$}
\psfrag{a4+}{$\alpha_4^+$}
\psfrag{a1-}{$\alpha_1^-$}
\psfrag{a2-}{$\alpha_2^-$}
\psfrag{a3-}{$\alpha_3^-$}
\psfrag{a4-}{$\alpha_4^-$}
\psfrag{b1}{$\beta_1$}
\psfrag{b2}{$\beta_2$}
\psfrag{b3}{$\beta_3$}
\psfrag{b4}{$\beta_4$}
\includegraphics[scale=.7]{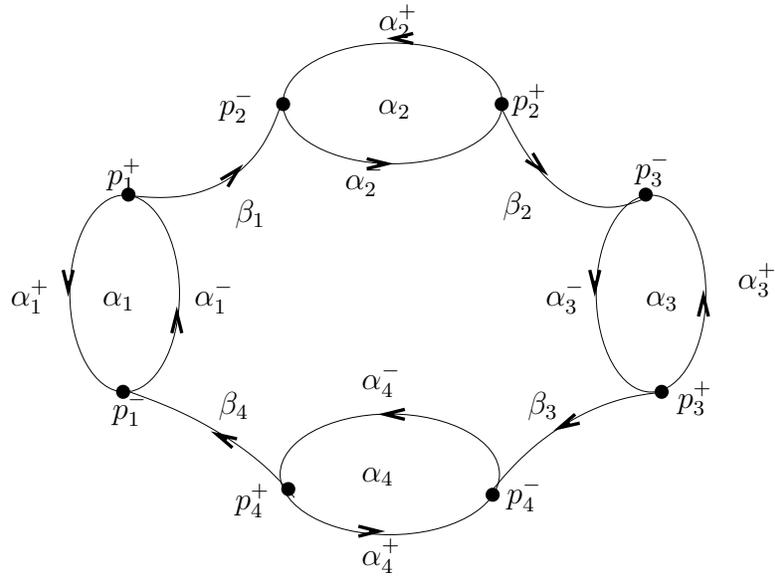}
\caption{A planar surface $P \approx \Sigma_{0,4}$}
\label{fig:planar3}
\end{figure}

\begin{figure}[!h]
\centering
\psfrag{a+}{$\alpha_j^+$}
\psfrag{a-}{$\alpha_j^-$}
\psfrag{p+}{$p_j^+$}
\psfrag{p-}{$p_j^-$}
\psfrag{c}{$\gamma_j$}
\psfrag{d}{$\delta_j$}
\includegraphics[scale=.5]{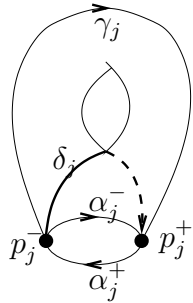}
\caption{A handle $H_j \approx\Sigma_{1,1}$}
\label{fig:handle2}
\end{figure}

\begin{figure}[!h]
\centering
\psfrag{a+}{$\alpha_j^+$}
\psfrag{a-}{$\alpha_j^-$}
\psfrag{p+}{$p_j^+$}
\psfrag{p-}{$p_j^-$}
\psfrag{e}{$\eta_j$}
\includegraphics[scale=.5]{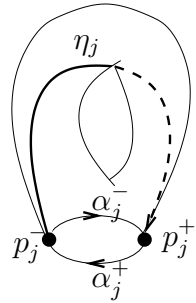}
\caption{A $(1,1)$-curve $\eta_j$ on the handle $H_j$}
\label{fig:handle3}
\end{figure}
\clearpage

\section{Infinitesimal transitivity}

The application of Theorem~\ref{thm:simple} involves several
lemmas to deduce that the flows of the Hamiltonian vector fields
$\Ham(\f_\gamma)$, where $\gamma\in\sS$, generate a transitive
action on $\Xb$.

\begin{lem}\label{lem:span}
Let $X$ be an affine variety over a field $k$.
Suppose that $\Ff\subset k[X]$ generates the coordinate ring $k[X]$.
Let $x\in X$.
Then the differentials $df(x)$, for $f \in \Ff$, span the cotangent space $T^*_x(X)$.
\end{lem}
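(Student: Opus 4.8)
The plan is to use the standard algebraic description of the cotangent space as $T^*_x(X) = \mathfrak{m}_x/\mathfrak{m}_x^2$, where $\mathfrak{m}_x\subset k[X]$ is the maximal ideal of functions vanishing at $x$, and to identify $df(x)$ with the class of $f - f(x)$ in this quotient. Under this identification the assignment $f \mapsto df(x)$ is a $k$-linear point derivation $d_x\colon k[X] \to \mathfrak{m}_x/\mathfrak{m}_x^2$: it annihilates constants and obeys the Leibniz rule $d_x(fg) = f(x)\,d_x(g) + g(x)\,d_x(f)$. The first step is simply to record these facts, which are immediate from the definition of $d_x$.

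Next I would exploit that $\Ff$ generates $k[X]$ as a $k$-algebra. Any $g\in k[X]$ can therefore be written as a polynomial, with coefficients in $k$, in finitely many elements $f_1,\dots,f_m\in\Ff$. Applying $d_x$ and using $k$-linearity together with the Leibniz rule repeatedly (equivalently, the chain rule for polynomials) yields $d_x(g) = \sum_{i=1}^m c_i\, d_x(f_i)$ for suitable scalars $c_i\in k$, namely the partial derivatives of the polynomial evaluated at the point $(f_1(x),\dots,f_m(x))$. Hence every differential $d_x(g)$ lies in the $k$-span of $\{d_x(f)\mid f\in\Ff\}$.

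Finally, I would observe that $d_x$ is surjective onto $T^*_x(X)$: for any $g\in k[X]$ the element $g - g(x)$ lies in $\mathfrak{m}_x$ and has the same differential as $g$, while the restriction of $d_x$ to $\mathfrak{m}_x$ is exactly the quotient map $\mathfrak{m}_x \to \mathfrak{m}_x/\mathfrak{m}_x^2$, which is onto. Combining this with the previous step shows that $\{df(x)\mid f\in\Ff\}$ spans the entire cotangent space $T^*_x(X)$.

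I do not expect a genuine obstacle here; the content is the formal observation that differentiation is a derivation, so algebra generators of $k[X]$ automatically produce spanning sets of differentials. The only point requiring care is the bookkeeping at a possibly singular point: one must interpret $T^*_x(X)$ as the Zariski cotangent space $\mathfrak{m}_x/\mathfrak{m}_x^2$ rather than in any sense presupposing smoothness, after which the argument is uniform in $x$ and in no way uses properties special to the character variety.
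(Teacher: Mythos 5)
Your proof is correct and takes essentially the same route as the paper: identify $T_x^*(X)$ with $\mathfrak{m}_x/\mathfrak{m}_x^2$ and use that algebra generators of $k[X]$ produce spanning classes there. If anything, your Leibniz-rule/chain-rule step supplies a detail the paper elides --- the paper asserts that the elements $f - f(x)$ ``span'' $\mathfrak{m}_x$, which is accurate only in the sense of generating the ideal (equivalently, spanning modulo $\mathfrak{m}_x^2$), and your computation is exactly the justification of that point.
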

\begin{proof}
Let $\m_x\subset k[X]$ be the maximal ideal corresponding to $x$.
Then the functions $f - f(x) 1$, where $f\in\Ff$, span $\m_x$.
The correspondence
\begin{align*}
\m_x &\longrightarrow T_x^*(X) \\
f &\longmapsto df(x)
\end{align*}
induces an isomorphism
$\m_x/\m_x^2\xrightarrow{\cong} T_x^*(X)$.
In particular it is onto.
Therefore the covectors $df(x)$ span $T_x^*(X)$ as claimed.
\end{proof}

\begin{lem}\label{lem:Ham}
Let $X$ be a connected symplectic manifold and $\Ff$ be a set of
functions on $X$ such that at every point $x\in X$, the
differentials $df(x)$, for $f\in\Ff$, span the cotangent space
$T_x^*(X)$. Then the group $\Gg$ generated by the Hamiltonian flows
of the vector fields $\Ham(f)$, for $f\in\Ff$, acts
transitively on $X$.
\end{lem}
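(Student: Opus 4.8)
The plan is to show that every orbit of $\Gg$ in $X$ is open; since the orbits partition the connected manifold $X$, a single open-and-closed orbit must then exhaust $X$, which is exactly transitivity. The whole argument rests on converting the spanning hypothesis on covectors into a spanning statement for the Hamiltonian vector fields, and then running a standard ``flow-out'' argument via the inverse function theorem.

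First I would exploit the nondegeneracy of the symplectic form $\omega$. By definition the Hamiltonian vector field $\Ham(f)$ is characterized by $\iota_{\Ham(f)}\omega = df$, and since $\omega$ is nondegenerate the bundle map $v \mapsto \iota_v\omega$ is a fiberwise isomorphism $T_x(X) \xrightarrow{\cong} T_x^*(X)$ at every point $x$. Consequently the tangent vectors $\Ham(f)(x)$, for $f\in\Ff$, span $T_x(X)$ if and only if the covectors $df(x)$ span $T_x^*(X)$. The latter holds at every $x$ by hypothesis, so the vector fields $\{\Ham(f) : f\in\Ff\}$ span the tangent space everywhere.

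Next I would establish openness of orbits. Fix $y\in X$, set $m = \dim X$, and use the previous step to choose $f_1,\dots,f_m\in\Ff$ with $\Ham(f_1)(y),\dots,\Ham(f_m)(y)$ a basis of $T_y(X)$. Writing $\phi^f_t$ for the time-$t$ flow of $\Ham(f)$, consider
\[
F(t_1,\dots,t_m) := \phi^{f_1}_{t_1}\circ\cdots\circ\phi^{f_m}_{t_m}(y),
\]
defined for $(t_1,\dots,t_m)$ near the origin. Its differential at $0$ sends the $i$-th standard basis vector to $\Ham(f_i)(y)$, so it is a linear isomorphism onto $T_y(X)$. By the inverse function theorem $F$ is a local diffeomorphism at $0$, whence its image contains an open neighborhood of $y$. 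Since each $\phi^{f_i}_{t_i}$ lies in $\Gg$, this image is contained in the orbit $\Gg\cdot y$, so $\Gg\cdot y$ contains an open neighborhood of each of its points and is therefore open.

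Finally, the orbits partition $X$ into disjoint open sets; the complement of any given orbit is a union of the remaining orbits and hence also open, so each orbit is simultaneously open and closed. As $X$ is connected and the orbits are nonempty, there is exactly one orbit, and $\Gg$ acts transitively. I expect the inverse-function-theorem step to be the main point, since it is what converts the pointwise infinitesimal span into an actual open set of reachable configurations. A secondary point worth noting is that the local argument uses the flows $\phi^f_t$ only for small time, so any incompleteness of the individual Hamiltonian vector fields causes no trouble.
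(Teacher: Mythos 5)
Your proof is correct and follows exactly the paper's argument: nondegeneracy of the symplectic form converts the spanning of covectors $df(x)$ into spanning of the tangent space by the vector fields $\Ham(f)(x)$, the inverse function theorem (applied in your case to the explicit composition-of-flows map $F$) gives openness of orbits, and connectedness plus the orbit partition gives transitivity. Your write-up merely spells out the inverse-function-theorem step and the irrelevance of incompleteness, which the paper leaves implicit.
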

\begin{proof}
The nondegeneracy of the symplectic structure implies that the
vector fields $\Ham(f)(x)$ span the tangent space $T_xX$ for every $x\in X$.
By the inverse function theorem, the $\Gg$-orbit $\Gg\cdot x$ of $x$ is open.
Since the orbits partition $X$ and $X$ is connected,
$\Gg\cdot x = X$ as claimed.
\end{proof}

\begin{prop}\label{prop:connected}
Let $b = (b_1,\dots,b_m)\in [-2,2]^n$.
Then $\Xb$ is either empty or connected.
\end{prop}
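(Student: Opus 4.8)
The plan is to reduce the assertion to the connectedness of the representation variety and then to analyze an explicit product map. Since the quotient map $\Hom_b(\pi,\SU)\to\Xb$ is a continuous surjection, it suffices to prove that $\Hom_b(\pi,\SU)$ is empty or connected. Using the presentation of $\pi$ recorded above, a homomorphism $\rho$ is determined by the images $A_1,\dots,A_{2g}$ of the handle generators and the images $X_1,\dots,X_n$ of the boundary generators, subject to the surface relation. The condition $\tr(\rho(\partial_i))=b_i$ forces each $X_i$ into the conjugacy class $\mathcal{C}_i:=\tr^{-1}(b_i)\subset\SU$, which is a single point when $b_i=\pm2$ and a $2$-sphere when $b_i\in(-2,2)$; in either case $\mathcal{C}_i$ is connected and simply connected. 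Thus $\Hom_b(\pi,\SU)=\mu^{-1}(1)$, where
\[
\mu\colon \SU^{2g}\times\mathcal{C}_1\times\cdots\times\mathcal{C}_n\longrightarrow\SU,\qquad \mu(A_\bullet,X_\bullet)=\prod_{j=1}^{g}[A_{2j-1},A_{2j}]\prod_{i=1}^{n}X_i,
\]
and the domain $K$ of $\mu$ is a compact, connected, and simply connected manifold.

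I would prove the stronger statement that $\mu^{-1}(z)$ is empty or connected for every $z\in\SU$, by induction on $g+n$. The engine of the induction consists of two building blocks: the commutator map $c\colon\SU\times\SU\to\SU$, $(A,B)\mapsto[A,B]$, is surjective with connected nonempty fibers; and the boundary-absorption map $\SU\times\mathcal{C}\to\SU$, $(w,X)\mapsto wX$, is surjective with connected fibers (each fiber is a copy of $\mathcal{C}$). The connectedness of the fibers of $c$ follows from $\pi_1(\SU)=0$ via the homotopy exact sequence, applied over the open dense set of regular values where Ehresmann's theorem makes $c$ a locally trivial fibration. The inductive step then absorbs either one handle (through $c$) or one boundary factor (through the product map) into a map whose fibers are already known to be connected.

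The base cases are the pair of pants $\Sigma_{0,3}$ and the one-holed torus $\Sigma_{1,1}$. For $\Sigma_{0,3}$ one must decide when $\{(X_1,X_2)\in\mathcal{C}_1\times\mathcal{C}_2:\tr(X_1X_2)=b_3\}$ is nonempty and, when it is, that it is connected: nonemptiness is governed by the \emph{triangle inequalities} among the angles $\arccos(b_i/2)$, and connectedness is seen by fixing $X_1$ and letting $X_2$ range over its $2$-sphere, the level set of $\tr(X_1X_2)$ being a circle or a point that varies continuously. The one-holed torus reduces directly to the surjectivity and fiber-connectedness of $c$.

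The hard part will be the absorption step, since connectedness of a single fiber is \emph{not} a formal consequence of connectedness of the base together with connectedness of all fibers: one must control the monodromy and the behavior across the critical locus. Here the critical points of $\mu$ are exactly the reducible representations, which form a subset of positive codimension, so the restriction of $\mu$ to the complementary locus is a submersion and Ehresmann's theorem applies over regular values; the simple-connectivity of $\SU$ then kills the potential $\pi_1(\SU)$-monodromy obstruction in the homotopy exact sequence, forcing $\pi_0$ of the generic fiber to be trivial. Finally I would pass from the generic irreducible fiber to the special fiber $\mu^{-1}(1)$ using that irreducible representations are dense in $\Hom_b(\pi,\SU)$ (as $\chi(\Sigma)<0$) together with properness of $\mu$, so that $\Hom_b(\pi,\SU)$ is the closure of its connected irreducible part; the degenerate cases in which $\Hom_b(\pi,\SU)$ consists entirely of reducible representations (for example $\Sigma_{1,1}$ with $b_1=2$) are treated separately, being commuting tuples that lie in a common maximal torus and hence forming a connected set.
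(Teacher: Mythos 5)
Your overall route (reduce to the relation map $\mu\colon \SU^{2g}\times\mathcal{C}_1\times\cdots\times\mathcal{C}_n\to\SU$ and induct by absorbing handles and boundary factors) is genuinely different from the paper, which does not prove this proposition at all but cites Newstead, Goldman's \emph{Topological components}, and Mehta--Seshadri. Precisely for that reason the burden of proof is entirely on your outline, and it has a real gap at its engine, the Ehresmann/monodromy step. The base of your fibration is not $\SU$ but the set of \emph{regular values} of $\mu$. In $\SU$ the critical points are the abelian (= reducible) tuples, so the critical values form a finite union of conjugacy classes, namely those of trace $2\cos\big(\sum_i\epsilon_i\theta_i\big)$ with $\epsilon_i=\pm1$, $\theta_i=\arccos(b_i/2)$. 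A non-central conjugacy class is a $2$-sphere, and a $2$-sphere \emph{separates} $S^3$: already for $\Sigma_{1,1}$ with $b_1\in(-2,2)$ the regular-value set is a union of two open balls. Each component $V$ is simply connected, so the homotopy exact sequence does kill monodromy over $V$; but it then proves connectedness of the fibers over $V$ only if $\mu^{-1}(V)$ is connected, and that does \emph{not} follow from connectedness of the domain, because the preimages of the critical-value spheres have codimension one and deleting them can disconnect the domain. You never address $\pi_0\big(\mu^{-1}(V)\big)$. The second gap is the fiber you actually want: whenever $\Hom_b(\pi,\SU)$ contains reducibles (i.e.\ some $\sum_i\epsilon_i\theta_i\in 2\pi\Z$), the point $1$ is a \emph{critical value}, so Ehresmann gives nothing near it; your bridge via ``density of irreducibles plus properness'' both assumes an unproved deformation statement (density of irreducibles in $\Hom_b$, which is of the same order of difficulty as the proposition itself) and, even granted, does not yield what you need, since the closure argument requires connectedness of the irreducible part of the critical fiber $\mu^{-1}(1)$ --- which is not a fiber over a regular value, so nothing you have proved applies to it. Connectedness does not pass from nearby regular fibers to a critical fiber.

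Ironically, the formal fact you disclaim --- ``connectedness of a single fiber is not a formal consequence of connectedness of the base together with connectedness of all fibers'' --- is \emph{true} for closed (in particular proper, and here compact-to-Hausdorff) surjections: if $f\colon X\to Y$ is a closed surjection with connected fibers onto a connected base, then any clopen partition of $X$ is fiber-saturated and pushes down to a clopen partition of $Y$, so $X$ is connected. Using this on the coordinate projections $\mu_{g,n}^{-1}(z)\to\mathcal{C}_n$ (whose fibers are $\mu_{g,n-1}^{-1}(zX_n^{-1})$) and $\mu_{g,n}^{-1}(z)\to\SU^2$ (whose fibers are $\mu_{g-1,n}^{-1}\big(z[A,B]^{-1}\big)$) turns your absorption scheme into a clean induction on $2g+n$, with exactly your two base cases ($\Sigma_{0,3}$ by linear algebra on $S^3\subset\mathbb{H}$, and the commutator map $c$, where your Ehresmann argument is correct because the critical value set of $c$ is the single point $1$ and its complement in $S^3$ is simply connected). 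The one point left to check is connectedness of the \emph{image} of each projection when some fibers are empty; for the boundary projection this holds because the image of $\mu_{g,n-1}$ is compact, connected and conjugation-invariant, hence of the form $\tr^{-1}([s_-,s_+])$, so the image is the spherical band $\{X\in\mathcal{C}_n : \tr(zX^{-1})\in[s_-,s_+]\}$, which is connected. In that form the monodromy machinery, and both gaps above, disappear; as written, however, your proof is incomplete exactly where the cited literature does its real work.
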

The proof follows from Newstead~\cite{Newstead1} and Goldman~\cite{TopComps}.
Alternatively, apply Mehta-Seshadri~\cite{MehtaSeshadri} to identify $\Xb$
with a moduli space of semistable parabolic bundles, and apply their
result that the corresponding moduli space is irreducible.

\begin{cor}\label{cor:transitiveHam}
Let $\Gg$ be the group generated by the flows of
the Hamiltonian vector fields $\Ham(\f_\gamma)$, where $\gamma\in\sS$.
Then $\Gg$ acts transitively on $\Xb$.
\end{cor}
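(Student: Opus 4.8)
The plan is to assemble Corollary~\ref{cor:transitiveHam} by combining the three preceding results, which have been arranged precisely so that their outputs chain together. First I would invoke Theorem~\ref{thm:simple} to obtain the finite set $\sS\subset\pi$ of simple closed curves whose trace functions $\{\f_\gamma : \gamma\in\sS\}$ generate the coordinate ring $\C[\Xb]$. The role of $\sS$ is exactly to supply a \emph{generating} family of functions, since the hypotheses of Lemma~\ref{lem:span} require only that the family generate the coordinate ring, not that it be the full ring.

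Next I would feed this into Lemma~\ref{lem:span}, taking $X = \Xb$ (more precisely, the open dense smooth submanifold of irreducible representations where the symplectic structure $\omega$ lives) and $\Ff = \{\f_\gamma : \gamma\in\sS\}$. The conclusion is that at every point $x$, the differentials $d\f_\gamma(x)$ span the cotangent space $T_x^*(\Xb)$. Here I must be slightly careful about the passage from the affine algebraic set to the smooth symplectic manifold: Lemma~\ref{lem:span} is stated for an affine variety, so I would apply it on the smooth locus, noting that the trace functions restrict to regular functions there and still generate. With the cotangent space spanned, I would then apply Lemma~\ref{lem:Ham}, whose hypothesis is exactly this spanning condition together with connectedness of $X$. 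Its conclusion is that the group $\Gg$ generated by the Hamiltonian flows of the $\Ham(\f_\gamma)$ acts transitively.

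The remaining ingredient is the connectedness hypothesis demanded by Lemma~\ref{lem:Ham}, and this is supplied by Proposition~\ref{prop:connected}, which asserts that $\Xb$ is either empty or connected. Since in the empty case there is nothing to prove, I may assume $\Xb$ is nonempty and hence connected; restricting to the dense smooth open subset of irreducible representations preserves connectedness (an open dense subset of a connected manifold whose complement has positive codimension is connected). This verifies the final hypothesis of Lemma~\ref{lem:Ham}, and transitivity of $\Gg$ follows immediately.

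The step I expect to require the most care is not any single logical implication—each is a direct citation—but rather the compatibility of the domains: Lemma~\ref{lem:span} is an algebraic statement about the cotangent space of the affine variety, while Lemma~\ref{lem:Ham} is a differential-geometric statement about the symplectic manifold of irreducible representations. The main obstacle is therefore to ensure that the spanning of $T_x^*$ in the algebraic sense descends to spanning in the tangent space of the smooth symplectic locus, so that the nondegeneracy of $\omega$ can convert spanning covectors into spanning Hamiltonian vector fields. Once one observes that the smooth locus is a Zariski-open subvariety on which the same generators restrict to a generating set, Lemma~\ref{lem:span} applies verbatim and the chain closes.
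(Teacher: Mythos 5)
Your proposal is correct and follows exactly the paper's own proof: Theorem~\ref{thm:simple} supplies the generating trace functions, Lemma~\ref{lem:span} gives spanning of the cotangent spaces, Proposition~\ref{prop:connected} gives connectedness, and Lemma~\ref{lem:Ham} concludes transitivity. Your extra care about passing between the affine variety and the smooth symplectic locus (and about the empty case) is a reasonable elaboration of a point the paper leaves implicit, but it is the same argument.
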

\begin{proof}
By Theorem~\ref{thm:simple},
\begin{equation*}
\{ f_\gamma \mid \gamma\in\sS \}
\end{equation*}
generates $\C[\Xb]$.
Lemma~\ref{lem:span} implies that at every point $x\in\Xb$ the differentials
$d\f_\gamma(x)$ span $T^*_x(\Xb)$.
Proposition~\ref{prop:connected} implies that $\Xb$ is connected.
Now apply Lemma~\ref{lem:Ham}.
\end{proof}

\section{Hamiltonian twist flows}
We briefly review results of Goldman~\cite{InvFuns},
describing the flows generated by
the Hamiltonian vector fields $\Ham(\f_\alpha)$,
when $\alpha$ represents a {\em simple\/}  closed curve.
In that case the local flow of this vector field on $\X(G)$
lifts to a flow $\xi_t$ on the representation variety $\Hom_B(\pi,G)$.
Furthermore this flow admits a simple description~\cite{InvFuns}
as follows.

\subsection{
Invariant functions and centralizing one-parameter subgroups}
Let  $\Ad$ be the adjoint representation
of $G$ on its Lie algebra $\gg$.
We suppose that $\Ad$ preserves a nondegenerate symmetric
bilinear form $\langle,\rangle$ on $\gg$. In the case $G=\SU$, this
will be
\begin{equation*}
\langle X, Y\rangle := \tr (X Y).
\end{equation*}

Let $G\xrightarrow{\f} \R$ be a function invariant under the inner automorphisms $\Inn(G)$.
Following \cite{InvFuns}, we describe how
$\f$ determines a way to associate to every element $x\in G$
a one-parameter subgroup
\begin{equation*}
\zeta^t(x) \, = \, \exp\big( t \F(x) \big)
\end{equation*}
centralizing $x$.
Given  $\f$, define  its {\em variation function\/}
$ G \xrightarrow{\F} \gG$ by:
\begin{equation*}
\langle \F(x), \upsilon \rangle = \frac{d}{dt}\bigg|_{t=0}
\f \big( x \exp(t\upsilon)\big)
\end{equation*}
for all $\upsilon\in\gG$.
Invariance of $\f$ under $\Ad(G)$ implies that $\F$ is $G$-equivariant:
\begin{equation*}
\F( g x g^{-1}) = \Ad(g) \F(x).
\end{equation*}
Taking $g = x$ implies that the one-parameter subgroup
\begin{equation}\label{eq:oneparameter}
\zeta^t(x) := \exp(t \F(x))
\end{equation}
lies in the centralizer of $x\in G$.

Intrinsically, $\F(x)\in\gG$ is dual (by  $\langle,\rangle$) to the
element of $\gG^*$ corresponding to
the left-invariant 1-form  on $G$ extending the covector
$df(x) \in T^*_x(G)$.

\subsection{Nonseparating loops}
There are two cases, depending on whether
$\alpha$ is {\em nonseparating\/} or {\em separating.\/}
Let $\Sigma | \alpha$ denote the surface-with-boundary obtained by
{\em splitting\/} $\Sigma$ along $\alpha$. The boundary of
$\Sigma | \alpha$ has two components, denoted by $\alpha_\pm$, corresponding
to $\alpha$. The original surface $\Sigma$ may be reconstructed as a quotient
space under the identification of $\alpha_-$ with $\alpha_+$.

If $\alpha$ is nonseparating, then $\pi = \pi_1(\Sigma)$ can be
reconstructed from the fundamental group $\pi_1(\Sigma | \alpha)$ as
an HNN-extension:
\begin{equation}\label{eq:hnn}
\pi \;\cong\;   \bigg(\pi_1(\Sigma | \alpha) \amalg
\langle\beta\rangle \bigg)
\bigg/
\bigg(\beta \alpha_- \beta^{-1} = \alpha_+ \bigg).
\end{equation}
A representation $\rho$ of $\pi$ is determined by:
\begin{itemize}
\item the restriction $\rho'$ of $\rho$ to the subgroup
$\pi_1(\Sigma|\alpha)\subset\pi$, and
\item the value $\beta' = \rho(\beta)$
\end{itemize}
which satisfies:
\begin{equation}\label{eq:hnnrep}
\beta'  \rho'(\alpha_-) \beta'^{-1} = \rho'(\alpha_+).
\end{equation}
Furthermore any pair $(\rho',\beta')$ where $\rho'$ is a representation of
$\pi_1(\Sigma|\alpha)$ and $\beta'\in G$ satisfies \eqref{eq:hnnrep}
determines a representation $\rho$ of $\pi$.

The {\em twist flow\/} $\xi_\alpha^t$, for $t\in\R$ on $\Hom(\pi,\SU)$, is
then defined as follows:
\begin{equation}\label{eq:hamtwist1}
\xi_\alpha^t(\rho):\gamma \longmapsto \begin{cases}  \rho(\gamma)
& \text{if}~ \gamma \in \pi_1(\Sigma|\alpha) \\
\rho(\beta) \zeta^t\big(\rho(\alpha_-)\big)
& \text{if}~ \gamma = \beta. \end{cases}
\end{equation}
where $\zeta^t$ is defined in \eqref{eq:oneparameter}.
This flow covers the flow generated
by $\Ham(\f_\alpha)$ on $\Xb$
(See \cite{InvFuns}).

\subsection{Separating loops}
If $\alpha$ separates, then $\pi = \pi_1(\Sigma)$ can be
reconstructed from the fundamental groups $\pi_1(\Sigma_i)$
of the two components $\Sigma_1, \Sigma_2$ of $\Sigma | \alpha$,
as an amalgam
\begin{equation}\label{eq:amalgam}
\pi \cong  \pi_1(\Sigma_1) \amalg_{\langle\alpha\rangle} \pi_1(\Sigma_2).
\end{equation}
A representation $\rho$ of $\pi$ is determined by its restrictions
$\rho_i$ to $\pi_1(\Sigma_i)$.
Furthermore any two representations $\rho_i$ of $\pi$
satisfying $\rho_1(\alpha) = \rho_2(\alpha)$ determines a representation
of $\pi$.

The {\em twist flow\/} is defined by:
\begin{equation}\label{eq:hamtwist2}
\xi_\alpha^t(\rho):\gamma \longmapsto \begin{cases}  \rho(\gamma)
& \text{if}~ \gamma \in \pi_1(\Sigma_1) \\
\zeta^t\big(\rho(\alpha)\big) \
\rho(\gamma) \
\zeta^{-t}\big(\rho(\alpha)\big)
& \text{if}~ \gamma \in \pi_1(\Sigma_2)
\end{cases}
\end{equation}
where $\zeta^t$ is defined in \eqref{eq:oneparameter}.


\subsection{Dehn twists}
Let  $\alpha\subset\Sigma$ be a simple closed curve.
The {\em Dehn twist\/} along $\alpha$ is the mapping class
$\tau_\alpha\in\Mod$ represented by a homeomorphism $\Sigma\longrightarrow\Sigma$ supported in a tubular neighborhood
$N(\alpha)$ of $\alpha$ defined as follows.  In terms of a homeomorphism
$S^1 \times [0,1] \xrightarrow{h} N(\alpha) $
which takes $\alpha$  to $S^1\times \{0\}$, the Dehn twist is
\begin{equation*}
\tau_\alpha \circ h \big(\zeta,t) = h(e^{2t \pi i} \zeta, t).
\end{equation*}
If $\alpha$ is essential, then $\tau_\alpha$ induces a nontrivial
element of $\Out(\pi)$ on $\pi=\pi_1(\Sigma)$.

If $\alpha$ is nonseparating, then $\pi = \pi_1(\Sigma)$ can be
reconstructed from the fundamental group $\pi_1(\Sigma | \alpha)$ as
an HNN-extension as in \eqref{eq:hnn}.
The Dehn twist $\tau_\alpha$ induces the automorphism
$(\tau_\alpha)_*\in\Aut(\pi)$ defined by:

\begin{equation*}
(\tau_\alpha)_*:\gamma \longmapsto
\begin{cases}  \gamma
& \text{if}~ \gamma \in \pi_1(\Sigma|\alpha) \\
\gamma  \alpha
& \text{if}~ \gamma = \beta. \end{cases}
\end{equation*}

The induced map $(\tau_\alpha)^*$ on $\Hom(\pi,G)$ maps
$\rho$ to:
\begin{equation}\label{eq:dehnnonsep}
(\tau_\alpha)^*(\rho):\gamma \longmapsto
\begin{cases}  \rho(\gamma)
& \text{if}~ \gamma \in \pi_1(\Sigma|\alpha) \\
\rho(\gamma)  \rho(\alpha)^{-1}
& \text{if}~ \gamma = \beta. \end{cases}
\end{equation}

If $\alpha$ separates, then $\pi = \pi_1(\Sigma)$ can be
reconstructed from the fundamental groups $\pi_1(\Sigma_i)$
as an amalgam as in \eqref{eq:amalgam}.
The Dehn twist $\tau_\alpha$ induces the automorphism
$(\tau_\alpha)_*\in\Aut(\pi)$ defined by:
\begin{equation*}
(\tau_\alpha)_*:\gamma \longmapsto
\begin{cases}  \gamma
& \text{if}~ \gamma \in \pi_1(\Sigma_1) \\
\alpha\gamma\alpha^{-1}
& \text{if}~ \gamma \in \pi_1(\Sigma_2)
\end{cases}.
\end{equation*}
The induced map $(\tau_\alpha)^*$ on $\Hom(\pi,G)$ maps
$\rho$ to:
\begin{equation}\label{eq:dehnsep}
(\tau_\alpha)^*(\rho):\gamma \longmapsto
\begin{cases}
\rho(\gamma)
& \text{if}~ \gamma \in \pi_1(\Sigma_1) \\
\rho(\alpha)^{-1}\rho(\gamma)  \rho(\alpha)
& \text{if}~ \gamma \in \pi_1(\Sigma_2).
\end{cases}\end{equation}

\section{The case $G=\SU$}
Now we specialize the preceding theory to
the case $G = \SU$.
Its Lie algebra $\su$ consists of $2\times 2$
traceless skew-Hermitian matrices over $\C$.

\subsection{One-parameter subgroups}
The trace function
\begin{align*}
\SU &\xrightarrow{\f} [-2,2] \\
x &\longmapsto  \tr(x)
\end{align*}
induces the variation function
\begin{align*}
\SU &\xrightarrow{\F} \su \\
x &\longmapsto  x - \frac{\tr(x)}2 \Id,
\end{align*}
the projection of $x\in\SU\subset\mathsf{M}_2(\C)$
to $\su$.
Explicitly, if $x\in\SU$, there exists $g\in\SU$ such that
\begin{equation*}
x = g \bmatrix e^{i\theta} & 0 \\ 0 & e^{-i\theta}  \endbmatrix g^{-1}.
\end{equation*}
Then $\f(x) = 2\cos(\theta)$,
\begin{equation*}
\F(x)  = g \bmatrix 2i\sin(\theta) & 0 \\ 0 & -2i\sin(\theta)  \endbmatrix g^{-1}
\in\su
\end{equation*}
and the corresponding one-parameter subgroup is
\begin{equation*}
\zeta^t(x) = g \bmatrix e^{2i\sin(\theta)t} & 0 \\ 0 & e^{-2i\sin(\theta)t} \endbmatrix g^{-1}
\in\SU
\end{equation*}
Except in two exceptional cases this one-parameter subgroup is isomorphic to $S^1$.
Namely,  $f(x)=\pm 2$, then $x = \pm \Id$. These comprise the {\em center\/} of $\SU$.
In all other cases,  $-2 < f(x)< 2$ and $\zeta^t(x)$ is a circle subgroup.
Notice that this circle subgroup contains $x$:
\begin{equation}\label{eq:embedDehnInFlow}
x \;=\; \zeta^{s(x)}(x)
\end{equation}
where
\begin{equation}\label{eq:exponentF}
s(x) \;:= \; \frac{2}{\sqrt{4-\f(x)^2}}\  \cos^{-1}\bigg( \frac{\f(x)}{2}\bigg).
\end{equation}
Furthermore
\begin{equation}\label{eq:period}
\zeta^t(x) = \Id
\end{equation}
if and only if
\begin{equation*}
t\;\in\; \frac{4\pi}{\sqrt{4-\f(x)^2}}\, \Z.
\end{equation*}
(Compare Goldman~\cite{Erg}.)

\begin{prop}\label{prop:DehnAndTwist}
Let $\alpha\in\pi$ be represented by a simple closed curve,
and $\xi_\alpha^t$ be the corresponding twist flow
on $\Hom(\pi,G)$ as defined in
\eqref{eq:hamtwist1} and \eqref{eq:hamtwist2}. 
Let $\rho\in\Hom(\pi,G)$.
Then
\begin{equation*}
(\tau_\alpha)^*(\rho) = \xi_\alpha^{s\big(\rho(\alpha)\big)}
\end{equation*}
where $s$ is defined in \eqref{eq:exponentF}.
\end{prop}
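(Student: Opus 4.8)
The plan is to prove the identity by comparing the two maps on representations componentwise, using the presentation of $\pi$ from whichever of the two cases applies. The key observation, recorded in equation~\eqref{eq:embedDehnInFlow}, is that the one-parameter subgroup $\zeta^t(x)$ passes through $x$ itself at the specific time $s(x)$, so that $\zeta^{s(x)}(x) = x$. The proposition is precisely the statement that evaluating the twist flow $\xi_\alpha^t$ at the time $t = s(\rho(\alpha))$ reproduces the Dehn twist action. Because both the twist flows \eqref{eq:hamtwist1}, \eqref{eq:hamtwist2} and the Dehn twist actions \eqref{eq:dehnnonsep}, \eqref{eq:dehnsep} are defined case-by-case (nonseparating versus separating, via HNN-extension or amalgam), I would organize the proof as two parallel verifications.

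First I would treat the nonseparating case. Here a representation is determined by its restriction to $\pi_1(\Sigma|\alpha)$ together with the value on the HNN-stable letter $\beta$, as in \eqref{eq:hnn}. On $\pi_1(\Sigma|\alpha)$ both maps act as the identity, so there is nothing to check. The content is on the generator $\beta$: the twist flow sends $\rho(\beta) \mapsto \rho(\beta)\,\zeta^t(\rho(\alpha_-))$ by \eqref{eq:hamtwist1}, while the Dehn twist sends $\rho(\beta)\mapsto \rho(\beta)\,\rho(\alpha)^{-1}$ by \eqref{eq:dehnnonsep}. Setting $t = s(\rho(\alpha))$ and invoking $\zeta^{s(x)}(x)=x$ from \eqref{eq:embedDehnInFlow}, I would need $\zeta^{s(\rho(\alpha))}(\rho(\alpha_-))$ to equal $\rho(\alpha)^{-1}$. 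Since the boundary components $\alpha_\pm$ both map to the class of $\alpha$, one has $\rho(\alpha_-) = \rho(\alpha)$, up to an orientation convention; the subtle point is matching the sign conventions so that the flow run for time $s$ produces $\rho(\alpha)^{-1}$ rather than $\rho(\alpha)$. This amounts to checking that the Dehn twist convention in \eqref{eq:dehnnonsep} and the sign of $\F$ in the definition of $\zeta^t$ are compatible; I expect this bookkeeping to be the main obstacle, resolved by a careful reading of the orientation of $\alpha$ in the HNN presentation.

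Next I would treat the separating case in the same spirit, using the amalgam \eqref{eq:amalgam}. A representation decomposes as $(\rho_1,\rho_2)$ agreeing on $\langle\alpha\rangle$. On $\pi_1(\Sigma_1)$ both maps are the identity, so again the verification concentrates on $\pi_1(\Sigma_2)$. The twist flow conjugates by $\zeta^t(\rho(\alpha))$ per \eqref{eq:hamtwist2}, and the Dehn twist conjugates by $\rho(\alpha)^{\mp 1}$ per \eqref{eq:dehnsep}. Setting $t = s(\rho(\alpha))$ and applying \eqref{eq:embedDehnInFlow} directly, the conjugating element $\zeta^{s(\rho(\alpha))}(\rho(\alpha))$ becomes $\rho(\alpha)$, so the conjugation $\zeta^t(\rho(\alpha))\,\rho(\gamma)\,\zeta^{-t}(\rho(\alpha))$ collapses to $\rho(\alpha)\,\rho(\gamma)\,\rho(\alpha)^{-1}$, which is exactly what \eqref{eq:dehnsep} prescribes. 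This case is cleaner because the element being exponentiated is literally $\rho(\alpha)$, so no boundary-orientation matching is needed.

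The final step is to dispose of the degenerate possibility $\rho(\alpha) = \pm\Id$, where $s(\rho(\alpha))$ in \eqref{eq:exponentF} is not defined. In that situation $\rho(\alpha)$ is central, so both the Dehn twist and the twist flow act trivially (conjugation by a central element is trivial, and $\rho(\alpha)^{-1}=\pm\Id$ combined with the central value makes the HNN modification trivial as well), and the identity holds vacuously once one adopts the convention $s(\pm\Id)=0$. With both cases established and this boundary case noted, the proposition follows. The essential engine throughout is the single relation $x = \zeta^{s(x)}(x)$; everything else is a matter of matching the case definitions and the orientation conventions.
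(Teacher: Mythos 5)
Your strategy is exactly the paper's own proof, which reads in full: ``Combine \eqref{eq:embedDehnInFlow} with \eqref{eq:hamtwist1} when $\alpha$ is nonseparating case and \eqref{eq:hamtwist2} when $\alpha$ separates.'' Your nonseparating verification, including the inverse discrepancy you flag there, is right: at $t=s(\rho(\alpha))$ the flow \eqref{eq:hamtwist1} sends $\rho(\beta)\mapsto\rho(\beta)\,\rho(\alpha)$, while \eqref{eq:dehnnonsep} sends $\rho(\beta)\mapsto\rho(\beta)\,\rho(\alpha)^{-1}$. But your separating case is wrong as written, and inconsistent with your own nonseparating analysis: the identical discrepancy occurs there. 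At $t=s(\rho(\alpha))$, equation \eqref{eq:hamtwist2} conjugates the restriction to $\pi_1(\Sigma_2)$ by $\zeta^{s}(\rho(\alpha))=\rho(\alpha)$, giving $\gamma\mapsto\rho(\alpha)\,\rho(\gamma)\,\rho(\alpha)^{-1}$, whereas \eqref{eq:dehnsep} prescribes $\gamma\mapsto\rho(\alpha)^{-1}\rho(\gamma)\,\rho(\alpha)$; these are the actions of $\tau_\alpha^{-1}$ and $\tau_\alpha$ respectively, and they differ. The discrepancy is in fact uniform across both cases: with the paper's stated conventions one has $\xi_\alpha^{s(\rho(\alpha))}(\rho)=\rho\circ(\tau_\alpha)_*=(\tau_\alpha^{-1})^*(\rho)$, so the clean resolution of your ``bookkeeping obstacle'' is $(\tau_\alpha)^*(\rho)=\xi_\alpha^{-s(\rho(\alpha))}(\rho)$, i.e.\ the proposition holds after replacing $s$ by $-s$ (equivalently $\tau_\alpha$ by $\tau_\alpha^{-1}$). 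This is harmless for the application, since only the cyclic group $\langle(\tau_\alpha)^*\rangle$ sitting inside the circle orbit matters for ergodicity, but your claim of ``exact'' agreement in the separating case is not correct.

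Your treatment of the degenerate locus is also false. If $\alpha$ is nonseparating and $\rho(\alpha)=-\Id$, then \eqref{eq:dehnnonsep} sends $\rho(\beta)\mapsto\rho(\beta)\cdot(-\Id)^{-1}=-\rho(\beta)$, which is a genuinely nontrivial change (it even changes the character: $\tr\rho(\beta)$ changes sign), while the twist flow is stationary there because $\F(-\Id)=-\Id-\tfrac{-2}{2}\Id=0$, so $\zeta^t(\rho(\alpha))\equiv\Id$. Hence the asserted identity cannot be rescued on this locus by any convention such as $s(\pm\Id)=0$; the Dehn twist simply does not lie on the (constant) flow orbit. The correct move is to exclude this set rather than absorb it: the proposition only makes sense where $s$ in \eqref{eq:exponentF} is defined, that is where $\f(\rho(\alpha))\neq\pm2$, and since $\f_\alpha^{-1}(\{\pm2\})$ is a null set this costs nothing in the ergodicity argument (indeed Proposition~\ref{lem:discreteflow} already discards a larger null set $\mathcal{N}$).
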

\begin{proof}
Combine \eqref{eq:embedDehnInFlow} with
\eqref{eq:hamtwist1} when $\alpha$ is nonseparating case and
\eqref{eq:hamtwist2} when $\alpha$ separates.
\end{proof}

The basic dynamical ingredient of our proof,
(like the original proof in \cite{Erg} is the
ergodicity of an irrational rotation of
$S^1$. There is a unique translation-invariant
probability measure on $S^1$ (Haar measure).
Furthermore this measure is ergodic under
the action of any infinite cyclic subgroup.
Recall that an action of  group $\Gamma$ of 
measure-preserving transformations 
of a measure space $(X,\mathcal{B},\mu)$ 
is {\em ergodic\/}
if and only if
every invariant measurable set has either
measure zero or has full measure 
(its complement has measure zero).

\begin{lem}\label{lem:irrational}
If $\cos^{-1}\big( \f(x)/2\big)/\pi$ is irrational,
then the cyclic group $\langle x\rangle$ is a
dense subgroup of the one-parameter subgroup
\begin{equation*}
\{\zeta^t(x)\mid t\in\R\}\cong S^1
\end{equation*}
and acts ergodically on $S^1$
with respect to Lebesgue measure. 
\end{lem}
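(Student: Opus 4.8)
The plan is to transport everything to a standard circle $\R/T\Z$, recognize $x$ there as rotation by an irrational fraction of a full turn, and then invoke the classical density and ergodicity statements for irrational rotations. Concretely, I would first use the period computation \eqref{eq:period} to identify the map $t\mapsto\zeta^t(x)$ with an isomorphism $\R/T\Z\xrightarrow{\cong}\{\zeta^t(x)\mid t\in\R\}$, where $T=4\pi/\sqrt{4-\f(x)^2}$; since $-2<\f(x)<2$ the target is genuinely a circle. By \eqref{eq:embedDehnInFlow} the generator $x$ is the image of $s(x)\bmod T$, so the cyclic group $\la x\ra$ corresponds to the additive subgroup of $\R/T\Z$ generated by $s(x)$, namely $\{k\,s(x)\bmod T\mid k\in\Z\}$.

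The essential step is to compute the rotation number $s(x)/T$. Inserting \eqref{eq:exponentF} and the value of $T$, the common factor $\sqrt{4-\f(x)^2}$ cancels, leaving
\begin{equation*}
\frac{s(x)}{T}\;=\;\frac12\cdot\frac{\cos^{-1}\!\big(\f(x)/2\big)}{\pi}.
\end{equation*}
By hypothesis $\cos^{-1}\big(\f(x)/2\big)/\pi$ is irrational, and halving an irrational number leaves it irrational; thus $s(x)/T$ is irrational, i.e.\ $x$ acts on the circle as an irrational rotation.

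Density is then Kronecker's theorem: the orbit $\{k\,s(x)\bmod T\mid k\in\Z\}$ of an irrational rotation is dense in $\R/T\Z$, so $\la x\ra$ is dense in the one-parameter subgroup. For ergodicity I would run the standard Fourier argument on $L^2(\R/T\Z)$: if $f$ is invariant under translation by $s(x)$, then comparing Fourier coefficients gives $\hat f(k)\big(e^{2\pi i k\,s(x)/T}-1\big)=0$ for every $k\in\Z$, and irrationality of $s(x)/T$ makes the bracket nonzero for $k\neq 0$, forcing $\hat f(k)=0$ for $k\neq 0$ and hence $f$ constant almost everywhere. The lemma is essentially a repackaging of the classical Weyl--Kronecker equidistribution and the ergodicity of irrational rotations; the only genuine computation is the cancellation producing $s(x)/T$, and the single subtlety worth flagging is that the factor $\frac12$ does not spoil irrationality---so I do not expect any real obstacle here.
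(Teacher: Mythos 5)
Your proof is correct, but it is worth knowing that the paper offers no argument at all for this lemma: immediately after the statement, the authors simply write that ``for these basic facts'' one should see Furstenberg, Katok--Hasselblatt, Morris, or Zimmer. So your proposal does not so much diverge from the paper's proof as supply the proof the paper delegates to textbooks. What you add, and what is genuinely the only content specific to this setting, is the explicit rotation-number computation: identifying the circle subgroup with $\R/T\Z$ via \eqref{eq:period}, placing $x$ in it via \eqref{eq:embedDehnInFlow}, and cancelling $\sqrt{4-\f(x)^2}$ between \eqref{eq:exponentF} and $T$ to get $s(x)/T=\tfrac12\cos^{-1}\big(\f(x)/2\big)/\pi$, with the correct observation that the factor $\tfrac12$ does not spoil irrationality. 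After that, density is Kronecker and ergodicity is the standard Fourier argument, exactly the ``basic facts'' the citations cover. One remark that makes your argument more robust than it may appear: the paper's displayed formulas for $\F(x)$ and $\zeta^t(x)$ (with $2i\sin\theta$ in the exponent) are off by a factor of $2$ from the variation function $\F(x)=x-\tfrac{\tr(x)}{2}\Id$, whereas \eqref{eq:exponentF} and \eqref{eq:period} are mutually consistent with the latter; since the rotation number $s(x)/T$ is intrinsic (any rescaling of the parameter $t$ rescales $s(x)$ and $T$ by the same factor), your conclusion $s(x)/T=\cos^{-1}\big(\f(x)/2\big)/(2\pi)$ is independent of which normalization one adopts, so the lemma holds either way.
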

For these basic facts see 
Furstenberg~\cite{Furstenberg},
Haselblatt-Katok~\cite{HasselblattKatok},
Morris~\cite{Morris} or Zimmer~\cite{Zimmer}.

\begin{cor}\label{cor:DehnTwistErgodic}
Let $\alpha, \xi_\alpha^t$ and $\tau_\alpha$
be as in Proposition~\ref{prop:DehnAndTwist}.
Then for almost every $b\in[-2,2]$,
$(\tau_\alpha)^*$ acts ergodically on
the orbit
\begin{equation*}
\{\xi_\alpha^t([\rho])\}_{t\in\R},
\end{equation*}
when $\f_\alpha(\rho) = b$.
\end{cor}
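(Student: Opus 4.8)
The plan is to identify the twist-flow orbit of $[\rho]$ with a circle on which $(\tau_\alpha)^*$ acts as an irrational rotation for almost every $b$, and then to invoke Lemma~\ref{lem:irrational}. Since $\f_\alpha$ is the Hamiltonian generating $\xi_\alpha^t$, it is constant along the flow, so $\f_\alpha \equiv b$ on the entire orbit $\{\xi_\alpha^t([\rho])\}$. The values $b = \pm 2$ correspond to $\rho(\alpha) = \pm\Id$ and form a measure-zero set, so I discard them and assume $-2 < b < 2$. Writing $x = \rho(\alpha)$, the one-parameter subgroup $\{\zeta^t(x)\}$ is then a genuine circle $\cong S^1$, and by \eqref{eq:period} the twist flow $\xi_\alpha^t$ is periodic with period $T(b) = 4\pi/\sqrt{4 - b^2}$.

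First I would make the identification of the orbit with $S^1$ precise. In the nonseparating case \eqref{eq:hamtwist1} the flow right-multiplies $\rho(\beta)$ by $\zeta^t(x)$, and in the separating case \eqref{eq:hamtwist2} it conjugates by $\zeta^t(x)$; in either case the assignment $t \mapsto \xi_\alpha^t([\rho])$ factors through $\R/T(b)\Z$ and carries Lebesgue measure to the natural smooth measure on the orbit. Under this identification the twist flow is the uniform rotation flow, and advancing by $s(b)$ corresponds, via \eqref{eq:embedDehnInFlow} together with the homomorphism property $\zeta^{nt}(x) = \big(\zeta^t(x)\big)^n$, to multiplication by $\zeta^{s(b)}(x) = x$. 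Since $\alpha$ is fixed by the flow, each further iterate takes the same step $s(b)$, so by Proposition~\ref{prop:DehnAndTwist} the $n$-th iterate $\big((\tau_\alpha)^*\big)^n$ corresponds to multiplication by $x^n$. Hence the $(\tau_\alpha)^*$-orbit of $[\rho]$ is exactly the image of the cyclic subgroup $\langle x\rangle \subset \{\zeta^t(x)\}$.

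With this in hand the ergodicity is immediate from Lemma~\ref{lem:irrational}: whenever $\cos^{-1}(b/2)/\pi$ is irrational, $\langle x \rangle$ is dense in $\{\zeta^t(x)\}\cong S^1$ and acts ergodically with respect to Lebesgue measure, and this ergodicity transfers through the measure-preserving identification to the orbit. Equivalently, the rotation number of $(\tau_\alpha)^*$ on the circle is $s(b)/T(b) = \cos^{-1}(b/2)/2\pi$, which is irrational precisely when $\cos^{-1}(b/2)/\pi$ is. Finally I would dispose of the almost-every clause: the set of $b \in [-2,2]$ for which $\cos^{-1}(b/2)/\pi$ is rational is $\{\,2\cos(p\pi/q)\,\}$, a countable---hence Lebesgue-null---set, and adjoining the two endpoints keeps it null. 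Thus the conclusion holds for almost every $b$.

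The main obstacle I anticipate is the bookkeeping in the second step: verifying that $t \mapsto \xi_\alpha^t([\rho])$ really descends to a measure-preserving identification of $\R/T(b)\Z$ with the orbit---so that no further collapsing of the circle occurs after passing to the conjugation quotient---and confirming that the Dehn-twist iterate matches multiplication by $x^n$ exactly. Once the orbit is correctly identified with $(S^1,\text{Lebesgue})$, the dynamical content is entirely contained in Lemma~\ref{lem:irrational}.
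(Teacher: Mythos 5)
Your proposal is correct and follows essentially the same route as the paper, whose entire proof reads ``Combine Proposition~\ref{prop:DehnAndTwist} with Lemma~\ref{lem:irrational}''; your write-up is a detailed expansion of exactly that combination, with the rotation number $s(b)/T(b)=\cos^{-1}(b/2)/2\pi$ and the countability of the exceptional set $\{2\cos(p\pi/q)\}$ being the intended (but unstated) bookkeeping. The collapsing worry you raise is harmless in any case, since a finite quotient of an irrational rotation is again an irrational rotation.
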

\begin{proof}
Combine Proposition~\ref{prop:DehnAndTwist}
with Lemma~\ref{lem:irrational}.
\end{proof}


\begin{prop}\label{lem:discreteflow}
Let $\alpha \in S$ be a simple closed curve, with twist vector field
$\xi_\alpha$ and Dehn twist
$\tau_\alpha$. Let $\Xb \xrightarrow{\psi}  \R$ be a
measurable function invariant under the cyclic group
$\langle (\tau_\alpha)^* \rangle$.
Then there exists a nullset $\mathcal{N}$ of $\Xb$ such that the restriction of $\psi$
to the complement of $\mathcal{N}$ is constant on each
orbit of the twist flow $\xi_\alpha$.
\end{prop}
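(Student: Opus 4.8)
The plan is to exploit that $\f_\alpha$ is a first integral of the twist flow, that on almost every level set the Dehn twist acts on each orbit as an ergodic rotation (Corollary~\ref{cor:DehnTwistErgodic}), and then to globalize by disintegrating the invariant measure over the orbit space. First I would check that $\f_\alpha$ is invariant under $\xi_\alpha$: in both descriptions \eqref{eq:hamtwist1} and \eqref{eq:hamtwist2} the twist flow leaves $\rho(\alpha)$ unchanged, since $\alpha$ lies in $\pi_1(\Sigma|\alpha)$ (resp.\ $\pi_1(\Sigma_1)$), on which $\rho$ is fixed. Hence $\f_\alpha(\xi_\alpha^t(\rho)) = \f_\alpha(\rho)$ for all $t$, and every $\xi_\alpha$-orbit lies in a single level set $\f_\alpha^{-1}(b)$. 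On the full-measure open set $U$ of irreducible classes with $-2 < \f_\alpha < 2$, the element $\rho(\alpha)$ is noncentral, so $\zeta^t(\rho(\alpha))$ is a circle and each orbit of $\xi_\alpha$ is a circle of period $4\pi/\sqrt{4-\f_\alpha^2}$ by \eqref{eq:period}; by Proposition~\ref{prop:DehnAndTwist} and \eqref{eq:exponentF} the Dehn twist $(\tau_\alpha)^*$ is the time-$s(\rho(\alpha))$ map, that is, rotation of each orbit-circle through the fraction $\cos^{-1}(\f_\alpha/2)/(2\pi)$ of a full turn.

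Next I would dispose of the bad parameters. Let $B_0 := \{b\in(-2,2) : \cos^{-1}(b/2)/\pi\in\Q\}$, which is countable. Since $\f_\alpha$ is real-analytic and nonconstant on the connected smooth irreducible locus, each level set $\f_\alpha^{-1}(b)$ is a proper analytic subset and hence $\mu$-null; as $B_0$ is countable, $\f_\alpha^{-1}(B_0)$ is $\mu$-null. For $b\notin B_0$, Lemma~\ref{lem:irrational} and Corollary~\ref{cor:DehnTwistErgodic} show that $(\tau_\alpha)^*$ acts as an irrational, and therefore ergodic, rotation on every orbit contained in $\f_\alpha^{-1}(b)$.

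Finally I would globalize by disintegration. The twist flow is a measure-preserving, locally free circle action on $U$, so $\mu$ disintegrates (Rokhlin) over the orbit space as $\mu = \int \mu_O\,d\nu(O)$, where each $\mu_O$ is the rotation-invariant (Haar) measure on the orbit $O$ — the only $\xi_\alpha$-invariant probability measure on a circle rotated at constant speed. On each orbit $O$ lying over some $b\notin B_0$, the function $\psi$ is invariant under the ergodic rotation $(\tau_\alpha)^*$, so $\psi$ equals a constant $c(O)$ for $\mu_O$-almost every point of $O$. Thus the set $\mathcal N'$ of points at which $\psi$ differs from $c(O)$ on its own orbit is $\mu_O$-null for $\nu$-almost every $O$, and Fubini gives $\mu(\mathcal N')=0$. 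Taking $\mathcal N$ to be the union of $\mathcal N'$, of $\f_\alpha^{-1}(B_0)$, and of the complement of $U$ — all $\mu$-null — yields the claim: off $\mathcal N$, $\psi$ is constant on each orbit of $\xi_\alpha$.

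The step I expect to be the main obstacle is the last one: producing a genuine disintegration of $\mu$ into the rotation-invariant measures on individual flow-orbits and verifying that the per-orbit exceptional sets assemble into a single $\mu$-nullset. This requires knowing that the orbit space of the circle action on $U$ is a standard (Lebesgue) measurable space, so that Rokhlin's theorem applies, and that the conditional measures are genuinely Haar; the latter follows from $\mu$-invariance of the flow, but must be reconciled with possible finite stabilizers and with the critical locus of $\f_\alpha$, both of which I would confine to $\mu$-nullsets.
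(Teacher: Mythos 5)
Your proposal is correct and follows essentially the same route as the paper: disintegrate the invariant measure over the orbit space of the twist flow, note that almost all fibers are circles on which the Dehn twist acts as the rotation through the angle determined by $s(\rho(\alpha))$, excise the nullset $\f_\alpha^{-1}\big(2\cos(\Q\pi)\big)$ where that rotation is rational, and apply ergodicity of irrational rotations (Corollary~\ref{cor:DehnTwistErgodic}) fiberwise. The only difference is that you spell out details the paper leaves implicit (nullity of the bad level sets, Haar conditional measures, and the Fubini assembly of per-orbit nullsets), which is a refinement rather than a new approach.
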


\begin{proof}
Disintegrate the symplectic measure on $\Xb$ over the quotient map
\begin{equation*}
\Xb \longrightarrow \Xb/{\xi_\alpha}
\end{equation*}
as in Furstenberg~\cite{Furstenberg} or Morris~\cite{Morris}, 3.3.3,
3.3.4.  By \eqref{eq:period} almost all fibers of this map are
circles.

The subset
\begin{equation*}
\mathcal{N} := \f_\alpha^{-1} \big( 2 \cos(\Q \pi)\big) \subset \Xb
\end{equation*}
has measure zero.
By Corollary~\ref{cor:DehnTwistErgodic},
the action of $\big(\tau_\alpha\big)^*$ is ergodic on each circle
in the complement of $\mathcal{N}$.
In particular, $\psi$ factors through the quotient map, as desired.
\end{proof}

\begin{proof}[Conclusion of proof of Main Theorem]
Suppose that $\Xb\xrightarrow{\psi}\R$ is a measurable function
invariant under $\Mod$;
we show that $\psi$ is almost everywhere constant.

To this end let $\sS$ be the collection of simple closed curves in
Theorem~\ref{thm:simple}.
Then, for each $\alpha\in\sS$, the function
$\psi$ is invariant under the mapping
$(\tau_\alpha)^*$ induced by the Dehn twist along $\alpha\in\sS$.
By Proposition~\ref{lem:discreteflow}, $\psi$ is constant along
almost every orbit of the Hamiltonian flow $\xi_\alpha$ of $\Ham(\f_\alpha)$.
Thus, up to a nullset, $\psi$ is constant along the orbits
of the group $\Gg$ generated by these flows.
By Corollary~\ref{cor:transitiveHam}, $\Gg$ acts transitively on $\Xb$.
Therefore $\psi$ is almost everywhere constant, as claimed.
The proof is complete.
\end{proof}


\begin{thebibliography}{99}


\bibitem{FarbMargalit}
Farb, B. and Margalit, D.
``A primer on the mapping class group,'' (in preparation).

\bibitem{Furstenberg}
Furstenberg, H., ``Recurrence in Ergodic Theory
                and Combinatorial Number Theory,'' Princeton University
                Press (1981)

\bibitem{Nature} Goldman, W.,
{\em The symplectic nature of fundamental groups of surfaces, \/}
Adv.\  Math.\  {\bf 54} (1984), 200-225.

\bibitem{InvFuns} \bysame,
{\em Invariant functions on Lie groups and Hamiltonian flows of
surface group representations, \/}
Inv.\ Math.\ {\bf 85} (1986),1--40.

\bibitem{TopComps}
\bysame, \emph{Topological components of spaces of representations},
Inv.\  Math. \ \textbf{93} (1988), no.~3, 557--607.

\bibitem{Erg} \bysame, {\em Ergodic theory on moduli spaces,\/}
{\em Ann. Math.} 146 (1997), 475-507.

\bibitem{Csymp} \bysame,
{\em The complex symplectic geometry of $SL(2,C)$-characters over surfaces, \/}
in ``Algebraic Groups and Arithmetic,''
Proceedings of the celebration of the sixtieth birthday of M.\ S.\
Raghunathan,'' December 17--22, 2001
Tata Inst. Fund. Res., Mumbai, (2004), 375--407
{\tt math.DG/0304307}.
u
\bibitem{TraceCoords}
\bysame, {\em Trace coordinates on Fricke spaces of some simple
hyperbolic surfaces,\/} in ``Handbook of Teichm\"uller theory, volume II",
(A.\ Papaodopoulos, ed.), European Math. Soc. (to appear).

\bibitem{GoldmanMillson} \bysame and Millson, J.,
{\em Eichler-Shimura homology and the finite generation of cusp forms
by hyperbolic Poincar\'e series,\/}
Duke Math.\ J.\ {\bf 53} (1986), 1081--1091.

\bibitem{GoldmanXia}
Goldman, W.\  and Xia, E., {\em Action of the Johnson-Torelli group
on $\SU$-character varieties,\/} (in preparation).

\bibitem{GHJW}
Guruprasad, K., Huebschmann, J., Jeffrey, L.\ and Weinstein, A.,
{\em Group systems, groupoids, and moduli spaces of parabolic bundles,\/}
Duke Math.\ J. {\bf 89} (1997), 377--412.

\bibitem{HasselblattKatok}
Katok, A. and  Hasselblatt, B. 
``Introduction to the modern theory of dynamical systems,''
Encyclopedia of Mathematics and its Applications {\bf 54} 
Cambridge University Press, Cambridge (1995).

\bibitem{Horowitz} Horowitz, R.\,
{\em Characters of free groups represented in the two-dimensional
linear group,\/} Comm.\ Pure Appl. Math.\ {\bf 25} (1972), 635--649.

\bibitem{Hu1} Huebschmann, J.,
{\em Symplectic and Poisson structures of certain moduli spaces I, \/}
Duke Math.\ J.\ {\bf 80} (1995), 737--756.


\bibitem{Lawton1}
Lawton, S.,
{\em Minimal Affine Coordinates for $\mathsf{SL}(3,\C)$-Character Varieties of Free Groups,\/}
J.\  {\bf 320} Issue 10, (2008), 3773--3810.

\bibitem{Lawton2} \bysame,
{\em Algebraic Independence in 
$\mathsf{SL}(3,\C)$-Character Varieties of Free Groups,\/} 
{\tt arXiv:0807.0798v1}.

\bibitem{Magnus} Magnus, W.,
{\em Rings of Fricke characters and automorphism groups of free groups,\/}
Math.\ Zeit. {\bf 170} (1980), 91--103

\bibitem{MehtaSeshadri}
Mehta, V.B.\ and  Seshadri, C. S.
{\em Moduli of vector bundles on curves with parabolic structures,\/}
Math.\ Ann.\  {\bf 248} (1980), no. 3, 205--239.

\bibitem{DehnNielsen}
Morita, S., {\em Introduction to mapping class groups of surfaces and related groups,\/} Chapter 7 in ``Handbook of Teichm\"uller theory, volume I,"
( A.\ Papadopoulos, ed.), IRMA Lectures in Mathematics and Theoretical Physics 11,  European Math. Soc., 353--386 (2007).

\bibitem{Morris} Morris, D.\ W.,
``Ratner's theorems on unipotent flows,''
Chicago Lectures in Mathematics. University of Chicago Press, Chicago, IL, (2005).

\bibitem{Newstead1} Newstead, P.\ E.,
{\em Topological properties of some spaces of stable bundles,\/}
Topology {\bf 6} (1967), 241--262.

\bibitem{Newstead} \bysame,
``Introduction to moduli problems and orbit spaces,''
Tata Instute of Fundamental Research, Bombay (1978).

\bibitem{Palesi}
Palesi, F.,
{\em Ergodic action of the mapping class group on moduli spaces
of representations of non-orientable surfaces,\/}
{\tt math.GT.0807.1615}.

\bibitem{PickrellXia} Pickrell, D.\ and Xia, E.,
{\em Ergodicity of Mapping Class Group Actions on Representation Varieties
I:  Closed Surfaces,  \/}
Comment.\ Math.\  Helv. {\bf 77} (2002), 339-362.

\bibitem{Procesi} Procesi, C.,
{\em The invariants of $n\times n$ matrices,\/}
Adv.\ Math.\ {\bf 19} (1976), 306-381

\bibitem{Zimmer} Zimmer, R.,
``Ergodic theory and semisimple groups,
Monographs in Mathematics {\bf 81} 
Birkh\"auser Verlag Basel (1984)

\end{thebibliography}
\end{document}